\numberwithin{equation}{section}
\newcommand{\beq}{\begin{equation}}
\newcommand{\eeq}{\end{equation}}
\newcommand{\beqs}{\begin{eqnarray*}}
\newcommand{\eeqs}{\end{eqnarray*}}
\newcommand{\beqn}{\begin{eqnarray}}
\newcommand{\eeqn}{\end{eqnarray}}
\newcommand{\beqa}{\begin{array}}
\newcommand{\eeqa}{\end{array}}
\newcommand{\p}{\partial}
\newcommand{\Om}{\Omega}
\newcommand{\pom}{{\p\Om}}
\newcommand\tbbint{{-\mkern -16mu\int}}  \newcommand\dbbint{{-\mkern -19mu\int}}   \newcommand\bbint{ {\mathchoice{\dbbint}{\tbbint}{\tbbint}{\tbbint}} }
\newtheorem{prop}{Proposition}[section]
\newtheorem{theo}[prop]{Theorem}
\newtheorem{lem}[prop]{Lemma}
\newtheorem{cor}[prop]{Corollary}
\newtheorem{rem}[prop]{Remark}
\begin{document}
\baselineskip=16.4pt
\parskip=3pt

\title{H\"older regularity of Dirichlet problem for 
the complex Monge-Amp\`ere equation}
\author{Yuxuan Hu and Bin Zhou}

\begin{abstract} 
We study the Dirichlet problem for the complex Monge-Amp\`ere equation on a strictly pseudo-convex domain in $\mathbb C^n$ or a Hermitian manifold. Under the condition that the right-hand side lies in $L^p$ function and the boundary data are H\"older continuous, we prove the global H\"older continuity of the solution.
\end{abstract}

\address{Yuxuan Hu:School of Mathematical Sciences, Peking University, Beijing 100871, China.} 
\email{huyx@pku.edu.cn}

\address{Bin Zhou:School of Mathematical Sciences, Peking University, Beijing 100871, China.} 
\email{bzhou@pku.edu.cn}

\thanks {Partially supported by National Key R$\&$D Program of China 2023YFA009900 and NSFC Grant 12271008.}

\subjclass[2000]{Primary: 32W20; Secondary: 35J60.}

%\keywords{Complex Monge-Amp\`ere equation; Hessian equation; linearized Monge-Amp\`ere equation; Sobolev inequality.}

\maketitle

\section{Introduction}

Let  $\Omega\subset \mathbb C^n$ be a bounded pseudoconvex domain. Assume $\varphi\in C(\partial\Omega)$ and
$f\in L^p(\Omega)$.
We consider the Dirichlet problem
\begin{equation}
\label{LCMA-dir}
  \left\{ 
  \begin{alignedat}{2}(dd^cu)^n~& =f\,d\mu~&&\text{\ in} ~\ \ \Omega, \\\
 u ~&=\varphi~&&\text{\ on}~\ \  \pom,
\end{alignedat}
\right.
\end{equation}
where $u\in C(\bar\Omega)\cap PSH(\Omega)$ and $d\mu$ denotes the Lebesgue measure.
When $f\in C(\bar\Omega)$,  the existence of continuous weak solutions to \eqref{LCMA-dir} was established in \cite{Br, BT1, Wa}.
If, in addition, $f^{\frac{1}{n}}\in C^{\alpha}(\bar\Omega)$ for some $0<\alpha\leq 1$ and $\varphi\in C^{2\alpha}(\partial\Omega)$, it is shown in \cite{BT1} that the solution $u$ belongs to $C^{\alpha}(\bar\Omega)$. In a seminal work \cite{K1, K2}, Ko\l odziej proved that the Dirichlet problem still admits a continuous solution when $f\in L^p(\Omega)$ for $p>1$. Later,  Guedj-Ko\l odziej-Zeriahi \cite{GKZ} showed that $u\in C^\alpha(\bar\Omega)$ for $\alpha<\frac{2}{np^*+1}$ under the assumptions that $\varphi\in C^{1,1}(\bar\Omega)$, $f\in L^p(\Omega)$ and $f$ is  bounded near $\partial\Omega$. The requirement that $f$ be bounded near $\partial\Omega$ was subsequently removed by
\cite{Ch1}. In this context, the counterexample \cite{Pl} demonstrates the H\"older exponent can not exceed $\frac{2}{np^*}$.

The recent examples in \cite{WW} show that without regularity assumptions on the boundary value, the solution may fail to be Dini continuous even when the right-hand side $f \equiv 1$. This naturally raises the question of whether the solution remains Hölder continuous when $\varphi$ is only H\"older continuous.
We begin by recalling the approach in \cite{GKZ}.  For any $\epsilon>0$, define 
\[\Omega_\epsilon:=\{x\in \Omega|\, \text{dist}(x,\pom)>\epsilon\}\]
and
\begin{equation}
\hat{u}_{\epsilon}(x):=\bbint_{|\zeta-x|\leq \epsilon}u(\zeta)\,d\mu,\ x\in\Omega_{\epsilon}.\label{mo2}
\end{equation}
If $u$ is plurisubharmonic in $\Omega$,  then so is $\hat{u}_{\epsilon}$.  For simplicity, we denote 
\[\gamma_0=\gamma_0(p)=\frac{1}{p^*+1},\ \ \gamma_n=\gamma_n(p)=\frac{1}{np^*+1}\]
for $p>1$. 
To establish global H\"older estimates, it suffices to prove that $u$ is Hölder continuous near the boundary and to bound the $L^{\infty}$ norm of $\hat{u}_{\epsilon}-u$ by a constant multiple of ${\epsilon}^{\alpha}$
% there is a uniform constant $C>0$ such that  \[\hat{u}_{\epsilon}-u\leq C\epsilon^{\alpha'}\]  for some $\alpha'>0$
(see Lemma \ref{elementarylemma}). 
The key elements in \cite{GKZ} include:
\begin{enumerate}
\item [(1)] Construction of a H\"older continuous barrier, which implies boundary Hölder estimates for the solution;
\item [(2)] Reduction of the estimate for  $\sup_{\Omega_\epsilon}\{\hat{u}_{\epsilon}-u\}$ via stability estimates for the complex Monge-Ampère equation to an estimate of $\|\hat{u}_{\epsilon}-u\|_{L^1(\Omega_\epsilon)}^\gamma$, with $0\leq \gamma<\gamma_n$;
\item [(3)] An estimate of $\|\hat{u}_{\epsilon}-u\|_{L^1(\Omega_\epsilon)}$ in terms of the total mass of $\triangle u$, i.e.,
\beq\label{L1-lap}
\|\hat{u}_{\epsilon}-u\|_{L^1(\Omega_\epsilon)}\leq C\epsilon^2 \|\triangle u\|_{L^1(\Omega)}.
\eeq
\end{enumerate}

In  \cite{BKPZ}, by introducing a technique to truncate the mass of $\triangle u$, the authors established
\beq\label{L1-cut}
\|\hat{u}_{\frac{\epsilon}{2}}-u\|_{L^1(\Omega_\epsilon)}\leq C\epsilon^{1-\delta}\|(-\rho)^{1+\delta} \triangle u\|_{L^1(\Omega_{\frac{\epsilon}{2}})}
\eeq
for $0 < \delta < 1$, where $\rho$ is the defining function of $\Omega$. By estimating the right-hand side, they proved that the solution is $C^{\min\{\frac{\alpha}{m}, \frac{2\gamma}{m}\}}$-H\"older  continuous for $\gamma<\gamma_n$, when $f\in L^p(\Omega)$ and $\varphi\in C^\alpha(\bar\Omega)$ on a smooth pseudoconvex domain of finite type $m$ with $m\geq 2$. Subsequently, adapting the argument in \cite{BKPZ}, Charabati \cite{Ch2} obtained improved Hölder exponents:  $u\in C^{\min\{\frac{\alpha}{2}, \gamma\}}(\bar{\Omega})$ for any $0<\gamma<\gamma_n$ on smooth strongly pseudoconvex domains, and $u\in C^{\min\{\frac{\alpha}{4}, \frac{\gamma}{2}\}}(\bar{\Omega})$ on strongly pseudoconvex Lipschitz domains. 

In this paper, we employ alternative methods to improve the Hölder exponent. For the boundary Hölder regularity, we introduce a new construction of the barrier function, leading to a different exponent compared to \cite{Ch2}. In estimating $\|\hat{u}_{\epsilon}-u\|_{L^1(\Omega_\epsilon)}$,  we adopt a more elementary approach that avoids relying on the mass of $\triangle u$ and makes greater use of the boundary H\"older estimates. Specifically, we prove
\[\left|\left|{\hat{u}}_{\epsilon}-u\right|\right|_{L^1(\Omega_{\epsilon})}\leq C{\epsilon}^{1+\beta}.\]
Moreover, our results extend to complete Hermitian manifolds. In this setting, we use the regularizations from \cite{D, BD} in place of $\hat{u}_\epsilon$.
Our main result is as follows:

\begin{theo}\label{hold1}
Let $(X,\omega)$ be a complete Hermitian manifold, and let $\Omega$ be a relatively compact smooth strictly pseudo-convex open subset of $X$.
Suppose $0\leq f\in L^{p}(\Omega,\omega^n)$ for $p>1$ and $\varphi\in C^{\alpha}(\pom)$.
Let $u$ be a solution to the Dirichlet problem:
\begin{equation}
\label{CMA-manifold}
  \left\{ 
  \begin{alignedat}{2}(dd^cu)^n~& =f\omega^n~&&\text{\ in} ~\ \ \Omega, \\\
 u ~&=\varphi~&&\text{\ on}~\ \  \pom.
\end{alignedat}
\right.
\end{equation}
Then  for any $0<\gamma,\gamma'<\gamma_n$, $0<\gamma''<\gamma_0$, we have $u\in C^{\alpha'}(\bar{\Omega})$ with 
\[\alpha'=\min\{\beta,(1+\beta)\gamma\},\] where
\beq\label{beta}
\beta=\max\{\min\{\gamma'',\frac{\alpha}{2+\alpha}\},\ \min\{\frac{\alpha}{2},\gamma'\}\}.
\eeq
Furthermore, there exists a constant $C>0$, 
which depends only on $n$, $p$, $\alpha$, $\beta$, $\gamma$, $\left\|\varphi\right\|_{C^{\alpha}(\pom)}$ and $\|f\|_{L^p(\Om)}$ such that
\[\|u\|_{C^{0,\alpha'}(\Omega)}\leq C.\]
\end{theo}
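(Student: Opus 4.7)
The plan is to follow the three-step scheme of \cite{GKZ}, replacing step~(3) by an estimate that makes direct use of the boundary H\"older bound rather than of the mass of $\triangle u$. By Lemma \ref{elementarylemma} it suffices to combine a boundary H\"older modulus
\[|u(x)-u(y)|\le C|x-y|^\beta,\quad y\in\pom,\ x\in\bom,\]
with $\beta$ given by \eqref{beta}, together with an interior oscillation bound
\[\sup_{\Om_\epsilon}(\hat u_\epsilon-u)\le C\epsilon^{(1+\beta)\gamma}.\]
The conclusion $u\in C^{\alpha'}(\bom)$ with $\alpha'=\min\{\beta,(1+\beta)\gamma\}$ is then immediate. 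On a Hermitian manifold $(X,\omega)$, the volume averages $\hat u_\epsilon$ are replaced by the Demailly--B\l ocki regularizations from \cite{D, BD}, whose plurisubharmonicity defect is $O(\epsilon^2)$ and hence absorbed throughout.

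For the boundary estimate I would construct upper and lower barriers $\Psi^\pm$ with $\Psi^-\le u\le\Psi^+$ on $\bom$ and $\Psi^\pm=\varphi$ on $\pom$ by two different methods, producing the two terms inside the max defining $\beta$. The first takes a H\"older extension of $\varphi$ to $\bom$ and regularizes it against the Ko\l odziej $L^\infty$ bound for Dirichlet data with exponent $\gamma_0=1/(p^*+1)$; optimizing the balance between the H\"older modulus of the extension and the convexification penalty yields the factor $\alpha/(2+\alpha)$, hence the boundary exponent $\min\{\gamma'',\alpha/(2+\alpha)\}$. The second is closer in spirit to \cite{Ch2} and uses a perturbation of a strictly plurisubharmonic defining function $\rho$ of $\Om$, combined with the full Ko\l odziej stability for Monge-Amp\`ere to upgrade Charabati's exponent $\alpha/2$ to $\min\{\alpha/2,\gamma'\}$ for any $\gamma'<\gamma_n$. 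Taking the max gives $\beta$ as in \eqref{beta}.

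The key new input is the $L^1$ estimate
\[\|\hat u_\epsilon-u\|_{L^1(\Om_\epsilon)}\le C\epsilon^{1+\beta},\]
which dispenses entirely with the mass of $\triangle u$. The idea is to split $\Om_\epsilon$ into a boundary strip $S_\epsilon=\{\mathrm{dist}(\cdot,\pom)\le K\epsilon\}\cap\Om_\epsilon$ and its interior complement. On $S_\epsilon$ both $u$ and $\hat u_\epsilon$ differ from $\varphi$ by $O(\epsilon^\beta)$ by the boundary H\"older estimate, so the contribution is $O(\epsilon)\cdot O(\epsilon^\beta)=O(\epsilon^{1+\beta})$. Away from the boundary, a Fubini/Green-type identity expresses $\hat u_\epsilon-u$ as a weighted average of $dd^c u$ against a kernel supported in a ball of radius $\epsilon$; integrating by parts against the defining function $\rho$ transfers the mass of $dd^c u$ to a boundary integral that is in turn controlled by the boundary H\"older modulus, producing the gain of a full $\epsilon^{1+\beta}$ instead of $\epsilon^{2}\|\triangle u\|_{L^1}$.

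The final step combines the $L^1$ estimate with the Ko\l odziej-type stability inequality $\sup_{\Om_\epsilon}(\hat u_\epsilon-u)_+\le C\|(\hat u_\epsilon-u)_+\|_{L^1(\Om_\epsilon)}^{\gamma}$ for any $\gamma<\gamma_n$, modulo a boundary term absorbed by the boundary H\"older estimate, yielding $\sup_{\Om_\epsilon}(\hat u_\epsilon-u)\le C\epsilon^{(1+\beta)\gamma}$ as required. The hard part will be the $L^1$ estimate: the earlier bounds \eqref{L1-lap} and \eqref{L1-cut} pay either a factor of $\|\triangle u\|_{L^1}$, which for $f\in L^p$ is only finite near $\pom$, or a blow-up factor of $\epsilon^{-\delta}$, and the crucial point is to show that the boundary H\"older input $|u-\varphi|\lesssim \mathrm{dist}(\cdot,\pom)^\beta$ is strong enough, by itself, to absorb these, so that the interior contribution is genuinely $O(\epsilon^{1+\beta})$ without any mass assumption on $\triangle u$.
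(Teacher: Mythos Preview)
Your overall architecture is exactly the paper's: boundary H\"older exponent $\beta$, the $L^1$ bound $\|\hat u_\epsilon-u\|_{L^1(\Omega_\epsilon)}\le C\epsilon^{1+\beta}$, the stability estimate with $r=1$, and Lemma~\ref{elementarylemma}. Two points, however, differ in substance.

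\textbf{The $L^1$ estimate.} You call this ``the hard part'' and plan to represent $\hat u_\epsilon-u$ through a Green kernel against $dd^cu$, then integrate by parts against $\rho$ to trade $\triangle u$-mass for a boundary integral. That can in principle be pushed through, but it is precisely the type of argument \eqref{L1-lap}--\eqref{L1-cut} that the paper is trying to avoid, and the bookkeeping (distributional $\triangle u$, harmonic comparison to justify $|u-H|\lesssim d^\beta$, cutoffs) is nontrivial. The paper's point is that this step is \emph{not} hard: write
\[
\int_{\Omega_\epsilon}(\hat u_\epsilon-u)
=\int_{\Omega_\epsilon}\frac{1}{|B_\epsilon|}\int_{B_\epsilon(x)}(u(y)-u(x))\,dy\,dx
\]
and split the inner integral into $y\in\Omega_\epsilon$ and $y\notin\Omega_\epsilon$. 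By Fubini, the first piece is an integral over the symmetric set $\{(x,y)\in\Omega_\epsilon\times\Omega_\epsilon:|x-y|\le\epsilon\}$ of the antisymmetric integrand $u(y)-u(x)$, hence vanishes \emph{identically}. The entire $L^1$ mass therefore lives in the boundary layer $x\in\Omega_\epsilon\setminus\Omega_{2\epsilon}$, $y\in\Omega\setminus\Omega_\epsilon$, where the boundary H\"older estimate gives $|u(y)-u(x)|\le C\epsilon^\beta$ directly, and the layer has volume $O(\epsilon)$. No $\triangle u$, no integration by parts. On the manifold the kernel $K_\epsilon(z,x)$ is not symmetric, but $K_\epsilon(z,x)\wedge dV_\omega(z)-K_\epsilon(x,z)\wedge dV_\omega(x)=O(\epsilon^{2-2n})dV\wedge dV$ by \cite[Lemma~2.4]{DDGHKZ}, producing only an $O(\epsilon^2)$ error.

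\textbf{The boundary barrier with exponent $\alpha/(2+\alpha)$.} Your first method (mollify a H\"older extension of $\varphi$, convexify, optimize) is not the paper's, and it is not clear it yields $\alpha/(2+\alpha)$ as written. The paper's construction (Lemma~\ref{BoundaryHolder-manifold}) is local and quantitative: fix $y\in\partial\Omega$, $x_0\in\Omega$, set $r=|x_0-y|^{(1-\beta)/2}$, and on $\Omega\cap B_r(y)$ compare $u$ with $-\epsilon-A\rho+v$, where $\rho$ is the auxiliary function of Lemma~\ref{defofrho}, $\epsilon=Lr^\alpha$, $A=M/r^2$, and $v$ solves $(dd^cv)^n=f$ with zero boundary data on $\Omega\cap B_r(y)$. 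The $L^\infty$ estimate of Theorem~\ref{WWZThm} gives $|v|\le C r^{2n\delta}$, and balancing the three terms $r^\alpha$, $r^{-2}|x_0-y|$, $r^{2n\delta}$ with $r=|x_0-y|^{(1-\beta)/2}$ forces $\beta\le\alpha/(2+\alpha)$ and $\beta<\gamma_0$. This is where the exponent comes from.

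\textbf{Manifold case.} The Demailly regularization $\tilde u_\epsilon$ is only quasi-psh with an \emph{a priori} uncontrolled negative part, not merely $O(\epsilon^2)$. The paper applies the Kiselman--Legendre transform $u_{c,\epsilon}$ of Lemma~\ref{BDLemma} with $c=\epsilon^{\alpha'}$, so that $u_{c,\epsilon}+(Ac+K\epsilon)\rho$ is genuinely psh; one then runs the stability estimate on this function, and afterwards recovers $\tilde u_{\theta\epsilon}-u\le C\epsilon^{\alpha'}$ by showing the infimum in \eqref{Kiselman-trans} is attained at some $t_{\min}\ge\theta\epsilon$. You should not skip this step.
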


\vskip 20pt

\section{H\"older continuity via regularization}\label{reg-lem}

Regularization techniques are extensively used in the study of regularity for the complex Monge-Amp\`ere equation; see, for example, \cite{BD, GKZ, DDG+, KN1, KN2}. A detailed characterization of the modulus of continuity for subharmonic functions can be found in \cite{Z}. In this section, we present an elementary lemma on the characterization of H\"older continuity. Notably, the assumption of subharmonicity is removed, which may make the lemma applicable in broader settings.

\subsection{The flat case}
Let $\Omega$ be a bounded domain in $\mathbb{R}^n$ and let $u \in C(\overline{\Omega})$. Consider a function $\eta \in L^1(\mathbb{R}^n)$ that is a non-negative Borel function with compact support in the ball $B_R(0)$, normalized by $\displaystyle\int_{\mathbb{R}^n}\eta \,d\mu=1$.
 Assume there exist an open set $U \subset B_R(0)$ and a constant $\delta > 0$ such that $\eta \geq \delta$ on $U$. Define the regularization  $u_\epsilon = u*\eta_{\epsilon}$ on $\Omega_\epsilon$, where $\eta_{\epsilon}(x)=\frac{1}{\epsilon^n}\eta(\frac{x}{\epsilon})$. 

\begin{lem}\label{elementarylemma}
Assume there exist constants $\epsilon_0 > 0$, $C_1, C_2 > 0$, $R > 1$, and $\alpha \in (0,1)$ such that the following hold:
\begin{enumerate}
\item [(1)] $\left|u(x)-u(y)\right|\leq C_1\left|x-y\right|^{\alpha},\ \ \forall x\in \Omega,\ y\in\pom$;
\item [(2)] $\forall \epsilon\in(0,{\epsilon}_0)$, we have
$$\left|u_{\epsilon}(x)-u(x)\right|\leq C_2{\epsilon}^{\alpha},\ \,\forall x\in {\Omega}_{R\epsilon}.$$
\end{enumerate}
Then there exists $C>0$, depending only on $\epsilon_0$, $C_1$, $C_2$, R, $diam(\Omega)$, $\alpha$, and $\eta$, such that  
\[\left|u(x)-u(y)\right|\leq C\left|x-y\right|^{\alpha},\ \  \forall x,y\in\Omega.\]
\end{lem}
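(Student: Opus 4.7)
The plan is to prove the global H\"older bound by splitting into two regimes according to the distance of $x, y$ from $\pom$. Set $r = |x-y|$ and $d(x) = \operatorname{dist}(x, \pom)$; I may assume $r < r_0$ for some $r_0 \in (0, \epsilon_0)$ to be fixed, since for $r \geq r_0$ the estimate is immediate from $\|u\|_{L^\infty(\bom)}$ and $\text{diam}(\Om)$. Let $\lambda \geq R$ be a large constant to be chosen.

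In the boundary regime, where $\min\{d(x), d(y)\} \leq \lambda r$, say $d(x) \leq \lambda r$ (the other case is symmetric), I would pick $y' \in \pom$ with $|x-y'| = d(x) \leq \lambda r$ and apply hypothesis (1) twice via the triangle inequality:
\[
|u(x) - u(y)| \leq |u(x) - u(y')| + |u(y') - u(y)| \leq C_1 (\lambda r)^\alpha + C_1 ((\lambda+1)r)^\alpha \leq C r^\alpha.
\]

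In the interior regime, where $d(x), d(y) > \lambda r$, I would take $\epsilon = r$; then both $x$ and $y$ lie in $\Om_{R\epsilon}$ as long as $\lambda \geq R$. Hypothesis (2) gives $|u(x) - u_\epsilon(x)|, |u(y) - u_\epsilon(y)| \leq C_2 r^\alpha$, reducing the task to controlling $|u_\epsilon(x) - u_\epsilon(y)|$ by $Cr^\alpha$. To exploit the hypothesis $\eta \geq \delta$ on the open set $U \subset B_R(0)$, I would fix a ball $B_\rho(z_0) \subset U$ and introduce a common auxiliary point $p$ chosen so that both $(p-x)/\epsilon$ and $(p-y)/\epsilon$ lie in $B_\rho(z_0)$, which is possible whenever $\epsilon$ is taken slightly larger than $r/\rho$. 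Then $u_\epsilon(p)$ simultaneously integrates $u$ over neighborhoods of both $x$ and $y$ with weights bounded below by $\delta$, and I would combine this with hypothesis (2) applied at $p$, together possibly with an averaging over a family of such auxiliary centers, to transfer information between $u(x)$ and $u(y)$.

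The main obstacle is precisely this interior comparison. A naive estimate, using the $L^1$-oscillation of $\eta_\epsilon$ under translation by $|x-y|$, only yields $|u_\epsilon(x) - u_\epsilon(y)| \leq C \|u\|_{L^\infty} r / \epsilon$, which when optimized with the $C_2 \epsilon^\alpha$ term from (2) produces only the weaker exponent $\alpha/(\alpha+1)$. To recover the full exponent $\alpha$, one cannot afford the factor $r/\epsilon$ arising from a pointwise differentiation of the rough mollifier $\eta_\epsilon$; instead the argument must re-use hypothesis (2) at several well-chosen centers, exploiting the pointwise positivity $\eta \geq \delta$ on the open set $U$ to link $u(x)$ to $u(y)$ by a short chain of mollification-based comparisons, each of which costs only $C\epsilon^\alpha \leq Cr^\alpha$. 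I expect this reorganization---rather than any delicate estimate on $\eta$ itself---to be what makes the conclusion hold with the same exponent $\alpha$.
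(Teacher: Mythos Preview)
Your boundary case and the reduction of the interior case to controlling $|u_\epsilon(x)-u_\epsilon(y)|$ are correct and match the paper exactly. The gap is in your proposed resolution of the interior comparison.

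The auxiliary-point idea does not close. Applying hypothesis~(2) at a point $p$ only tells you that the average $u_\epsilon(p)$ is close to the \emph{value} $u(p)$; it does not compare $u_\epsilon(p)$ to $u(x)$ or $u(y)$. Knowing that $u_\epsilon(p)$ integrates $u$ with weight $\geq\delta/\epsilon^n$ near both $x$ and $y$ gives one linear relation among many unknowns, not a pointwise bound. Without subharmonicity (which the lemma deliberately does \emph{not} assume) you cannot convert positive weights into inequalities of the form $u(x)\le u_\epsilon(p)+C\epsilon^\alpha$. A chain of such auxiliary points therefore never produces a direct estimate $|u_\epsilon(x)-u_\epsilon(y)|\le Cr^\alpha$: that would be equivalent to what you are trying to prove.

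The paper's mechanism is different in kind: it does \emph{not} attempt a direct bound, but derives a recursion in the modulus of continuity $\omega(r)=\sup_{|x-y|\le r}|u(x)-u(y)|$. Using the lower bound $\eta\ge\delta$ on $U$ one sets $g(z)=\tfrac12\inf_{B_1(z)}\eta$ (nonzero because $U$ contains a ball of radius larger than $1$ after a harmless dilation), writes
\[
\eta(z)=f(z)+g(z)+g\!\left(z+\tfrac{x-y}{d}\right),\qquad d=|x-y|,
\]
with $f\ge0$ and $\int f=1-2\kappa$, $\kappa=\int g>0$. A change of variables in the last term makes the $g$-contributions telescope, yielding
\[
|u_d(x)-u_d(y)|\le (1-2\kappa)\,\omega(r)+\kappa\,\omega(2r).
\]
Combined with your two applications of~(2), this gives
\[
\omega(r)\le 2C_2 r^\alpha+(1-2\kappa)\,\omega(r)+\kappa\,\omega(2r)
\quad\Longrightarrow\quad
\omega(r)\le \tfrac{C_2}{\kappa}r^\alpha+\tfrac12\,\omega(2r),
\]
which iterates (starting from the trivial bound at scale $\epsilon_0$) to $\omega(r)\le Cr^\alpha$. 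The missing ingredient in your plan is precisely this bootstrap through $\omega(2r)$; the positivity of $\eta$ on $U$ is used to peel off a fixed fraction $\kappa$ of the kernel and shift it, not to anchor values at an auxiliary center.
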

\begin{proof}
Without loss of generality, we may assume that $U$ contains a ball of radius $3$; if not, we can replace $\eta$ by  a suitable dilation $\eta_{\delta_0}$ for some $\delta_0>0$.
For $0<r\leq diam(\Omega)$, define the modulus of continuity
\[\omega(r)=\sup_{x,y\in\Omega,\left|x-y\right|\leq r}\left|u(x)-u(y)\right|.\]
Now, fix $r\leq {\epsilon}_0$ and consider $x,y\in\Omega$ such that $\left|x-y\right|\leq r$.

First, suppose $\mathrm{dist}(x, \partial\Omega) \leq Rr$ or $\mathrm{dist}(y, \partial\Omega) \leq Rr$. By symmetry, assume the former. 
Then there exists $z \in \partial\Omega$ such that $|x - z| \leq Rr$, and hence $|y - z| \leq (R+1)r$. Using assumption (1), we obtain
\begin{eqnarray}\label{firstcase}
|u(x) - u(y)| &\leq& |u(x) - u(z)| + |u(z) - u(y)|\nonumber \\
&\leq& C_1 (Rr)^\alpha + C_1 ((R+1)r)^\alpha \leq 2C_1 (R+1)^\alpha r^\alpha.
\end{eqnarray}

%\beq\label{firstcase}
%\left|u(x)-u(y)\right|\leq\left|u(x)-u(z)\right|+\left|u(z)-u(y)\right|\leq 5C_1r^{\alpha}.
%\eeq

Now, suppose $x, y \in \Omega_{Rr}$. Let $d = |x - y|$. By assumption (2), we have:
\begin{eqnarray}
  \left|u(x)-u_d(x)\right|&\leq& C_2d^{\alpha},\label{secondcase1}\\
  \left|u(y)-u_d(y)\right|&\leq& C_2d^{\alpha},\label{secondcase2}
\end{eqnarray}
It remains to estimate $|u_d(x) - u_d(y)|$. By definition,
\beq\label{secondcase3}
\left|u_d(x)-u_d(y)\right|=\frac{1}{d^n}\left|\int_{\mathbb{R}^n}(u(x-z)-u(y-z))\eta(\frac{z}{d})\,dz\right|.
\eeq
Define an auxiliary function $g(z) = \frac{1}{2} \displaystyle\inf_{w \in B_1(z)} \eta(w)$ and set 
\[f(z) = \eta(z) - g(z) - g\left(z + \frac{x-y}{d}\right).\] 
By construction, $f \geq 0$. Let $\kappa = \displaystyle\int_{\mathbb{R}^n} g \,d\mu > 0$. Since $\displaystyle\int_{\mathbb{R}^n} \eta \,d\mu = 1$, we have $\displaystyle\int_{\mathbb{R}^n} f \,d\mu = 1 - 2\kappa$.
Under the change of coordinates $z\rightarrow z-x+y$, we have 
\[
\int_{\mathbb{R}^n}(u(x-z)-u(y-z))g\left(\frac{z+x-y}{d}\right)dz=\int_{\mathbb{R}^n}(u(2x-y-z)-u(x-z))g\left(\frac{z}{d}\right)\,dz.
\]
Then we obtain
\begin{align}
&\,\,\,\,\,\,\,\left|\int_{\mathbb{R}^n}(u(x-z)-u(y-z))\eta\left(\frac{z}{d}\right)\,dz\right|\nonumber\\[4pt]
&=\left|\int_{\mathbb{R}^n}(u(x-z)-u(y-z))\left(f\left(\frac{z}{d}\right)+g\left(\frac{z}{d}\right)+g\left(\frac{z+x-y}{d}\right)\right)\,dz\right|\nonumber\\[4pt]
&=\left|\int_{\mathbb{R}^n}(u(x-z)-u(y-z))f\left(\frac{z}{d}\right)+(u(2x-z-y)-u(y-z))g\left(\frac{z}{d}\right)\,dz\right|\nonumber\\[4pt]
&\leq \left|\int_{\mathbb{R}^n}\omega(r)f\left(\frac{z}{d}\right)+\omega(2r)g\left(\frac{z}{d}\right)\,dz\right|.
\end{align}
Substituting back into \eqref{secondcase3} yields
\beq
\left|u_d(x)-u_d(y)\right|\leq \kappa\omega(2r)+(1-2\kappa)\omega(r),
\eeq
Combining this with \eqref{secondcase1} and \eqref{secondcase2}, we obtain
\beq\label{secondcase}
\left|u(x)-u(y)\right|\leq 2C_2r^{\alpha}+\kappa\omega(2r)+(1-2\kappa)\omega(r).
\eeq
Combining the two cases \eqref{firstcase} and \eqref{secondcase}, we derive the key inequality
\beq\label{elemcru}
\omega(r)\leq \max\{2C_1 (R+1)^\alpha r^\alpha, 2C_2r^{\alpha}+\kappa\omega(2r)+(1-2\kappa)\omega(r)\}.
\eeq
We now iterate this inequality. Let
\[C_3=\frac{2C_1 (R+1)^\alpha (diam(\Omega))^{\alpha}}{{\epsilon}_0^{\alpha}}.\]
Note that by (1),
\[\omega({\epsilon}_0)\leq 2C_1 (R+1)^\alpha (diam(\Omega))^{\alpha}=C_3{\epsilon}_0^{\alpha}.\] 
 Define
\beq\label{const-c4}
C_4=\max\left\{C_3,\frac{C_2}{(1-2^{\alpha-1})\kappa}\right\}.
\eeq
We claim that for $r\leq {\epsilon}_0$, 
\beq\label{ite}
\text{if\ } \omega(2r)\leq C_4(2r)^{\alpha},\ \text{then} \ \omega(r)\leq C_4r^{\alpha}.
\eeq 
In fact 
if $\omega(r)\leq 2C_1 (R+1)^\alpha r^{\alpha}$, then the inequality follows immediately; Otherwise 
\beqs
\omega(r)&\leq& 2C_2r^{\alpha}+\kappa\omega(2r)+(1-2\kappa)\omega(r)\\
&\leq& 2C_2r^{\alpha}+\kappa C_4(2r)^{\alpha}+(1-2\kappa)\omega(r),
\eeqs 
i.e.,
\[\omega(r)\leq \frac{2C_2+2^\alpha \kappa C_4}{2\kappa}r^{\alpha}.\]
By the choice of $C_4$ in \eqref{const-c4},
\[\omega(r)\leq C_4 r^{\alpha}.\] 
This proves the claim.

The iteration argument now proceeds standardly. 
For $x,y\in\Omega$, 
if $\left|x-y\right|\geq{\epsilon}_0$
\[\left|u(x)-u(y)\right|\leq C_3\left|x-y\right|^{\alpha}.\] 
otherwise choose an integer $s$ such that 
\[\frac{{\epsilon}_0}{2^s}\leq\left|x-y\right|\leq\frac{{\epsilon}_0}{2^{s-1}}.\] 
By iterating the claim $s$ times starting from $r = \epsilon_0$, we obtain
\[\left|u(x)-u(y)\right|\leq \omega\left(\frac{{\epsilon}_0}{2^{s-1}}\right)\leq C_4 
\left(\frac{{\epsilon}_0}{2^{s-1}}\right)^\alpha\leq 2^\alpha C_4\left|x-y\right|^{\alpha}.\] 
Taking $C=\max\{C_3, 2^\alpha C_4\}$  completes the proof.
\end{proof}

\begin{rem}\label{elementarylemmaremark}
When $\eta=\frac{1}{\omega_n}\chi_{B_1}$, we recover the regularization defined in \eqref{mo2}. 
\end{rem}

\subsection{The manifold case}
A natural generalization of this regularizing function to a Hermitian manifold setting is given by \cite{D}. Let $(X, \omega)$
be a Hermitian manifold and $dim_{\mathbb C}(X)=n$. Let $\eta$ be a smoothing kernel  on $[0,+\infty)$ with compact support in $[0,1]$, such that $\displaystyle\int_{\mathbb{R}^n}\eta(\left|\xi\right|^2)\,d\mu=1$. Let
\beq\label{reg-manifold}
\tilde{u}_{\epsilon}(z)=\frac{1}{\epsilon^{2n}}\int_{\xi\in T_zX}u(\exp_z(\xi)) \eta\left(\frac{|\xi|^2_\omega}{\epsilon^2}\right) \,dV_\omega(\xi),
\eeq
where $\exp_z: T_zX\ni \xi \longrightarrow \exp_z(\xi)\in X$
is the exponential mapping at $z\in X$ and $dV_\omega(\xi)$ is the induced measure $\frac{1}{2^n n!}(dd^c|\xi|_\omega^2)^n$. 
%Using a finite covering by coordinate charts, the above lemma extends to bounded domains on a smooth manifold; see Theorem 3.4 in \cite{Z} for details. For the sake of completeness, we provide here a proof.
\begin{lem}\label{elementarylemma-manifold}
Let $\Omega \subset X$ be a bounded domain and $u \in L^\infty(\Omega)$. %Define
  %\begin{equation}
  %\tilde{u}_{\epsilon}(x)=\frac{1}{\epsilon^{n}}\int_{\xi\in T_xX}u(\exp_x(\xi)) \eta\left(\frac{|\xi|^2_\omega}{\epsilon^2}\right) \,dV_\omega(\xi).
  %\end{equation}
  Assume there exist constants $\epsilon_0 > 0$, $C_1, C_2 > 0$, $R>0$, and $\alpha \in (0,1)$ such that the following hold:
\begin{enumerate}
\item [(1)] $\left|u(x)-u(y)\right|\leq C_1\text{dist}(x,y)^{\alpha},\ \ \forall x\in \Omega,\ y\in\pom$;
\item [(2)] $\forall \epsilon\in(0,{\epsilon}_0)$, we have
$$\left|\tilde{u}_{\epsilon}(x)-u(x)\right|\leq C_2{\epsilon}^{\alpha},\ \,\forall x\in {\Omega}_{R\epsilon}.$$
\end{enumerate}
Here $\text{dist}(\cdot,\cdot)$ is the distance with respect to $\omega$ and $\Omega_{\epsilon}=\{x\in\Omega|\text{dist}(x,\pom)>\epsilon\}$.
Then there exists $C>0$, depending only on $n$, $\epsilon_0$, $C_1$, $C_2$, $R$, $\text{diam}(\Omega)$, $\alpha$, $\omega$ and $\eta$, such that  
\[\left|u(x)-u(y)\right|\leq C\text{dist}(x,y)^{\alpha},\ \  \forall x,y\in\Omega.\]
\end{lem}
\begin{proof}
Let $x,y\in\Omega$ such that $\mathrm{dist}(x,y)\leq r$. 
Suppose $\mathrm{dist}(x, \partial\Omega) \leq Rr$ or $\mathrm{dist}(y, \partial\Omega) \leq Rr$.
Then we have 
\[|u(x)-u(y)|\leq 2(R+1)^{\alpha}C_1 r^\alpha\]
as in the flat case.
Suppose $x, y\in\Omega_r$.
Following the argument used in the flat case, one need only estimate the term $|\tilde{u}_{d}(x)-\tilde{u}_{d}(y)|$ for all $x,y\in \Omega_{d}$ with $\operatorname{dist}(x,y)=d\leq r$ in order to bound $\omega(r)$. Let $\gamma$ denote the minimizing geodesic joining $x$ and $y$, and consider the parallel transport map $Tr: T_xX \rightarrow T_yX$ along $\gamma$. Observe that $Tr$ is an isometric transformation. Hence, we may compute
\beqs
&&\left|\tilde{u}_{d}(x)-\tilde{u}_{d}(y)\right|\nonumber\\[4pt]
&=&\frac{1}{d^{2n}}\left|\int_{\xi\in T_xX}u(\exp_x(\xi)) \eta\left(\frac{|\xi|^2_g}{d^2}\right) \,dV_\omega(\xi)-\int_{\xi\in T_yX}u(\exp_y(\xi)) \eta\left(\frac{|\xi|^2_g}{d^2}\right) \,dV_\omega(\xi)\right|\nonumber\\[4pt]
&=&\frac{1}{d^{2n}}\left|\int_{\xi\in T_xX}u(\exp_x(\xi)) \eta\left(\frac{|\xi|^2_g}{d^2}\right) \,dV_\omega(\xi)-\int_{\xi\in T_xX}u(\exp_y(Tr(\xi))) \eta\left(\frac{|\xi|^2_g}{d^2}\right) \,dV_\omega(\xi)\right|\nonumber\\[4pt]
&\leq& \frac{1}{d^{2n}}\left|\int_{\xi\in T_xX}\left(u(\exp_x(\xi))-u(\exp_x(\xi+\log_xy))\right) \eta\left(\frac{|\xi|^2_g}{d^2}\right) \,dV_\omega(\xi)\right|\nonumber\\[4pt]
&&+\frac{1}{d^{2n}}\left|\int_{\xi\in T_xX}\left(u(\exp_x(\xi+\log_xy))-u(\exp_y(Tr(\xi)))\right) \eta\left(\frac{|\xi|^2_g}{d^2}\right) \,dV_\omega(\xi)\right|\nonumber\\[4pt]
&&=:I+II.
\eeqs
By employing the same argument as in the flat case, we deduce the existence of a fixed constant $\kappa \in (0, \frac{1}{2})$ such that
\begin{equation}\label{estimate-I-1}
I\leq \kappa\omega(2s)+(1-2\kappa)\omega(s),
\end{equation}
where 
\[s=\sup_{\xi\in T_xX, \left|\xi\right|\leq 2d}\{\text{dist}(\exp_x(\xi),\exp_x(\xi+\log_xy))\}.\]
We claim that for some constant $C$, 
\begin{equation}\label{estimate-I-2}
s\leq d+Cd^2.
\end{equation}
It suffices to prove this estimate for sufficiently small $d$. To this end, we introduce geodesic normal coordinates centered at $x$ (so that $x$ corresponds to the origin). In these coordinates, radial lines through the origin are geodesics, and the exponential map at the origin is simply the identity: $\exp_x(\xi) = \xi$.
Now consider the map
\begin{eqnarray*}
TX &\longrightarrow&  X\\
(y,\xi)&\longrightarrow & \exp_y(\xi).
\end{eqnarray*} 
Expanding it in a Taylor series around $(0,0)$ and noting that its first-order term must be $y + \xi$ (as it recovers the flat case at leading order), we obtain
\[\exp_y(\xi)=y+\xi+O(\left|y\right|^2+\left|\xi\right|^2).\]
Therefore
\begin{equation}
\exp_x(\xi+\log_xy)=\exp_{\exp_x(\xi)}(v)+O(\left|y\right|^2+\left|\xi\right|^2).
\end{equation}
Here, $\log_x : X \to T_x X$ denotes the inverse of the exponential map, and $v \in T_{\exp_x(\xi)}X$ is the vector whose coordinates in the chosen geodesic normal chart coincide with those of $\log_x y$. When $\operatorname{dist}(x,y) = |\log_x y| = d$ and $|\xi| \le d$, we obtain $|v| = d + O(d^2)$. Consequently,
$s=d+O(d^2)$, which completes the proof of the claim.

Combining estimates \eqref{estimate-I-1} and \eqref{estimate-I-2}, we obtain
\begin{equation}\label{estimate-I}
  I\leq \kappa\omega(2r+Cr^2)+(1-2\kappa)\omega(r+Cr^2).
\end{equation}
To estimate $II$, define two smooth maps $F, G : T_x X \to X$ by
\[F(\xi)=\exp_x(\xi+\log_xy),\ \ G(\xi)=\exp_y(Tr(\xi)).\]
Both $F$ and $G$ are locally invertible, and their difference satisfies 
\beq\label{diff-map}
||F-G||_{C^1}=O(\left|y\right|^2+\left|\xi\right|^2)=O(d^2).
\eeq
Notice that $II$ can be written as
\[II=\int_{X}u(z)K(z)\, dz,\]
where $K(z)$ is the difference of the push-forwards (under $F$ and $G$) of the measure 
\[\frac{1}{d^{2n}}\eta\left(\frac{|\xi|^2_g}{d^2}\right) \,dV_\omega(\xi).\] 
By \eqref{diff-map}, we have $K=O(d^{1-2n})$.
 Moreover, the diameter of $\operatorname{supp}(K)$ is bounded by $O(d)$. Consequently,
 \begin{equation}\label{estimate-II}
II\leq C_3d\leq C_3r.
\end{equation}
for some $C_3>0$.

Combining estimates \eqref{estimate-I} and \eqref{estimate-II}, we obtain
\begin{equation}
\omega(r)\leq  \sup\{2(R+1)^{\alpha}C_1 r^\alpha, \kappa\omega(2r+Cr^2)+(1-2\kappa)\omega(r+Cr^2)+Cr^\alpha+Cr\}.
\end{equation}
For $r \leq \epsilon_0$, this inequality implies
\begin{equation}\label{ite-pre}
\omega(r)\leq \kappa \omega((2+C_4\epsilon_0)r)+(1-2\kappa)\omega((1+C_4\epsilon_0)r)+C_4r^\alpha
\end{equation}
for some $C_4>0$.
We now iterate this inequality. Because the form of \eqref{ite-pre} differs from the flat‑space case, a different iteration argument is required. Without loss of generality, we may assume that
\[\kappa(2+C_4\epsilon_0)^{\alpha}+(1-2\kappa)(1+C_4\epsilon_0)^{\alpha}<1,\]
otherwise we simply replace $\epsilon_0$ by a smaller number. Choose a constant
\[C_5\geq \frac{C_4}{1-\kappa(2+C_4\epsilon_0)^{\alpha}+(1-2\kappa)(1+C_4\epsilon_0)^{\alpha}}.\] 
We then assert the following claim:
\beq\label{ite}
\text{if\ } \omega(s)\leq C_5s^{\alpha} \text{\ for\ } s\geq (1+C\epsilon_0)r,\ \text{then} \ \omega(r)\leq C_5r^{\alpha}.
\eeq 
Indeed, \eqref{ite} follows directly from \eqref{ite-pre} together with the choice of $C_5$.
Since $\omega(\epsilon_0)\leq 2\left|\left|u\right|\right|_{L^\infty}\leq C$, an iteration analogous to the one used in the flat case (now based on \eqref{ite}) yields $\left|\left|u\right|\right|_{C^{\alpha}} \le C$.
\end{proof}

\begin{rem}
One can easily see that \eqref{reg-manifold} is also well-defined on a Riemannian manifold. Although the lemma above is stated in the Hermitian setting, its proof remains valid in the Riemannian setting as well.
\end{rem}

\vskip 20pt

\section{Boundary H\"older continuity}

In this section, we prove estimates near the boundary for solutions to the complex Monge-Amp\`ere equation. We begin by recalling a fundamental $L^{\infty}$-estimate.

\begin{theo}[\cite{K2, WWZ}]\label{WWZThm}
  Let $\Omega\subset\mathbb{C}^n$ be a pseudo-convex domain. Assume $\varphi\in C^0(\overline{\Omega})$, $f\in L^p(\Omega)$, $p>1$, let $u\in PSH(\Omega)$ be solution to the equation
 \begin{equation}
\label{MA}
  \left\{ 
  \begin{alignedat}{2}(dd^cu)^n~& =f\,d\mu~&&\text{\ in} ~\ \ \Omega, \\\
 u ~&=\varphi~&&\text{\ on}~\ \  \pom.
\end{alignedat}
\right.
\end{equation}
  Then for any $0<\delta<\frac{1}{np^*}$(where $\frac{1}{p} + \frac{1}{p^*} = 1$), there is a constant $C>0$ depending on $n$, $p$, $\delta$ and the the diameter of $\Omega$, such that 
   \beq
   |\inf_{\Omega}u|\leq |\inf_\pom\varphi |+C{\left\|f\right\|}_{L^p(\Omega)}^{\frac{1}{n}}\cdot\left|\Omega\right|^\delta.
   \eeq
  where $\left|\Omega\right|$ stands for the volume of $\Omega$.
\end{theo}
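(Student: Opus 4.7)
\medskip

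\noindent\textbf{Plan.} The statement is Ko\l odziej's classical $L^\infty$-bound \cite{K2}, sharpened in \cite{WWZ} so as to exhibit the explicit volume factor $|\Om|^\delta$. The approach is the standard capacity--comparison scheme. After subtracting $\inf_\pom\varphi$ from $u$, we may assume $\varphi\geq 0$ on $\pom$, and the task reduces to bounding $M:=|\inf_\Om u|$ by $C\|f\|_{L^p}^{1/n}|\Om|^\delta$. Consider the sub-level sets $\Om(s):=\{u<-s\}$, relatively compact in $\Om$ for $s>0$; the goal is to show that $\Om(s)=\emptyset$ once $s$ exceeds a constant multiple of $\|f\|_{L^p}^{1/n}|\Om|^\delta$.

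Two ingredients drive the proof. The first is the Bedford--Taylor comparison principle applied to $u$ and $-s+tv$, for an arbitrary test function $v\in\psh(\Om)$ with $-1\leq v\leq 0$: taking the supremum over $v$ yields
\[
t^n\,\ca\bigl(\Om(s+t),\Om\bigr)\;\leq\;\int_{\Om(s)}(\ddc u)^n\;\leq\;\|f\|_{L^p}\,|\Om(s)|^{1/p^*}.
\]
The second is an Alexander--Taylor-type volume/capacity inequality
\[
|E|\;\leq\;C_0\,|\Om|\exp\!\bigl(-c_0\,\ca(E,\Om)^{-1/n}\bigr),
\]
valid for every Borel $E\subset\Om$, with $c_0,C_0$ depending only on $n$ and $\mathrm{diam}\,\Om$. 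Substituting the second into the first produces the closed recursion
\[
t^n\,\ca\bigl(\Om(s+t),\Om\bigr)\;\leq\;C\,\|f\|_{L^p}|\Om|^{1/p^*}\exp\!\bigl(-c\,\ca(\Om(s),\Om)^{-1/n}/p^*\bigr).
\]

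A De Giorgi-style iteration along a geometric sequence $t_k=t_0\,2^{-k}$ now exploits the super-exponential decay on the right: one shows by induction that $\ca(\Om(s_k),\Om)$ decreases so rapidly that at $s_\infty=2t_0$ one has $\ca(\Om(s_\infty),\Om)=0$, and hence $\Om(s_\infty)=\emptyset$. Choosing $t_0$ to be the appropriate multiple of $\|f\|_{L^p}^{1/n}|\Om|^\delta$ and keeping $\delta$ strictly below the critical value $1/(np^*)$ produces the stated bound with constant $C=C(n,p,\delta,\mathrm{diam}\,\Om)$. The main technical difficulty is not the setup but the careful bookkeeping of exponents in this iteration, so as to propagate the $|\Om|^{1/p^*}$ factor arising from H\"older's inequality through the $n$-th roots at each step and recover an exponent arbitrarily close to the sharp value $1/(np^*)$; this is precisely the refinement carried out in \cite{WWZ} over the original argument in \cite{K2}.
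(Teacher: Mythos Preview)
The paper does \emph{not} supply its own proof of this theorem: it is quoted from \cite{K2, WWZ} as a ready-made $L^\infty$-estimate and then invoked in the barrier construction of Lemma~\ref{BoundaryHolder-manifold}. So there is no in-paper argument to compare your sketch against.

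That said, your outline is a faithful summary of the proof in the cited references. The comparison step $t^n\,\ca(\Om(s+t),\Om)\le\int_{\Om(s)}(\ddc u)^n$ together with H\"older's inequality, the Alexander--Taylor volume/capacity bound, and a De~Giorgi-type iteration along a dyadic sequence of levels is exactly the scheme of \cite{K2}; and you correctly identify that the contribution of \cite{WWZ} is the explicit bookkeeping that propagates the factor $|\Om|^{1/p^*}$ through the $n$-th roots in the iteration, yielding $|\Om|^\delta$ for any $\delta<\tfrac{1}{np^*}$. There is nothing to correct in the approach; if you wanted to make the sketch self-contained you would have to write out the induction explicitly (specifying the sequence $s_k$, the inductive hypothesis on $\ca(\Om(s_k),\Om)$, and the choice of $t_0$), but as a proposal it is on target.
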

Next, we construct an auxiliary function which will be used as a building block for a barrier function near the boundary.
\begin{lem}\label{defofrho}
  Let $\Omega$ be a strictly pseudo-convex smooth domain in $\mathbb{C}^n$ and assume $0\in\pom$. Then there exists a 
  function $\rho\in C^\infty(\mathbb{C}^n)$ and a radius $r_0 > 0$ such that:
  \begin{enumerate}
\item[(1)] $\rho(0) = 0$ and $\rho(z) \geq |z|^2$ for all $z \in \Omega \cap B_{r_0}(0)$;
\item[(2)] $-\rho\in \text{PSH}({\mathbb{C}}^n)$.
\end{enumerate}
\end{lem}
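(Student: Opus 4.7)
The plan is to define $\rho$ as a constant multiple of the negative real part of the (holomorphic) \emph{Levi polynomial} of $\Om$ at $0$. Because the real part of a holomorphic polynomial is pluriharmonic, this choice makes $\rho$ automatically smooth on $\bC^n$ and $-\rho$ automatically plurisubharmonic (in fact pluriharmonic) on all of $\bC^n$, so condition (2) comes for free. The substantive content of the lemma is the pointwise bound $\rho(z)\geq |z|^2$ on $\Om\cap B_{r_0}(0)$.

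I would first fix a smooth defining function $r$ of $\Om$ whose \emph{full} complex Hessian at $0$ is strictly positive definite, not merely positive on the complex tangent space. Starting from any smooth defining function $r_0$, the substitution $r=r_0+A r_0^2$ for $A$ large enough yields $(r_{z_j\bar z_k}(0))\geq c_0 I$ for some $c_0>0$: the added term contributes a rank-one positive semidefinite Hermitian form aligned with the complex gradient of $r_0$ at $0$, complementing the Levi-form positivity on complex tangent vectors that is supplied by strict pseudoconvexity. The modified $r$ is still a defining function for $\Om$ in a neighborhood of $\pom$.

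With such an $r$ in hand, I would Taylor-expand at $0$ in the standard form
\[ r(z)=\text{Re}\,L(z)+H(z)+O(|z|^3), \]
where the Levi polynomial
\[ L(z)=2\sum_j r_{z_j}(0)\, z_j+\sum_{j,k} r_{z_j z_k}(0)\, z_j z_k \]
is holomorphic in $z$ and $H(z)=\sum_{j,k} r_{z_j\bar z_k}(0)\, z_j\bar z_k\geq c_0|z|^2$. For $z\in\Om$ near $0$, the condition $r(z)<0$ gives
\[ \text{Re}\,L(z)\leq -H(z)+O(|z|^3)\leq -c_0|z|^2+C_1|z|^3, \]
and restricting to $|z|\leq r_0$ with $r_0\leq c_0/(2C_1)$ forces $\text{Re}\,L(z)\leq -\tfrac{c_0}{2}|z|^2$. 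Setting $\rho(z):=-\tfrac{2}{c_0}\text{Re}\,L(z)$ produces a pluriharmonic polynomial on $\bC^n$ with $\rho(0)=0$ and $\rho(z)\geq|z|^2$ throughout $\Om\cap B_{r_0}(0)$.

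The only delicate step I anticipate is the first one, promoting Levi-form positivity on complex tangents to positivity of the full complex Hessian of a defining function. Once this standard perturbation is in place, the rest is direct Taylor expansion, with the Levi polynomial serving as exactly the holomorphic object designed to absorb the non-Hermitian terms in the expansion of $r$ and leave a manifestly negative quadratic remainder on $\Om$.
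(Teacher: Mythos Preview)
Your proof is correct and follows the same overall strategy as the paper: Taylor-expand a strictly plurisubharmonic defining function at $0$ and read off a quadratic polynomial $\rho$ from the expansion. The one genuine difference is in which quadratic terms you keep. You set $\rho$ equal to a constant multiple of $-\mathrm{Re}\,L$, with $L$ the holomorphic Levi polynomial, so that $-\rho$ is pluriharmonic and condition~(2) is immediate. The paper instead retains a (slightly reduced) Hermitian part $(c_{ij}-\epsilon\delta_{ij})z_i\bar z_j$ inside $\rho$; this makes $-\rho$ \emph{strictly} plurisubharmonic, at the cost of a separate check that the matrix remains positive for small $\epsilon$. Your variant is a bit cleaner and entirely standard; the paper's variant yields the marginally stronger conclusion that $-\rho$ is strictly PSH, which is not used anywhere afterward.

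The step you flag as delicate---upgrading Levi positivity on complex tangents to positivity of the full complex Hessian via $r\mapsto r+Ar^2$---is exactly what the paper absorbs into the phrase ``there exists a defining function which is strictly plurisubharmonic near $0$''; your explicit justification of this point is correct. One minor notational remark: you use the symbol $r_0$ both for the initial defining function and for the final radius.
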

\begin{proof}
  By the strict pseudo-convexity of $\Omega$, 
  there exists a defining function $f \in C^\infty(\mathbb{C}^n)$ which is strictly plurisubharmonic near $0$ such that, for some small $r_0 > 0$,
  \[\Omega\cap B_{r_0}(0)=\{z\in B_{r_0}(0) \ |\ f(z)<0\}.\] 
  The function $f$ has a Taylor series expansion near $0$:
  \[f(z)=\sum\limits_{j=1}^nRe(a_jz_j)+\sum\limits_{i,j=1}^{n}(Re(b_{ij}z_iz_j)+c_{ij}z_iz_{\overline{j}})+O(\left|z\right|^3).\]
  Now, define the function
    \[\rho=-C\left(\sum\limits_{j=1}^nRe(a_jz_j)+\sum\limits_{i,j=1}^{n}Re(b_{ij}z_iz_j)+(c_{ij}-\epsilon\delta_{ij})z_iz_{\overline{j}}\right)\] 
for constants $C > 0$ and $\epsilon > 0$ to be chosen.  
For sufficiently small $\epsilon$, the matrix $(c_{ij}-\epsilon\delta_{ij})$ remains positive definite, ensuring that $-\rho$ is plurisubharmonic.
Furthermore, for $z \in \Omega \cap B_{r_0}(0)$, we have $f(z) < 0$, which implies
\[\sum\limits_{j=1}^nRe(a_jz_j)+\sum\limits_{i,j=1}^{n}Re(b_{ij}z_iz_j)\leq \sum\limits_{i,j=1}^{n}-c_{ij}z_iz_{\overline{j}}+O(\left|z\right|^3).\]
Substituting this into the definition of $\rho$ yields:
  \[\rho(z)\geq C(\epsilon \left|z\right|^2+O(\left|z\right|^3)).\]
By first choosing $\epsilon$ small enough to preserve plurisubharmonicity, and then choosing $C$ sufficiently large and $r_0$ sufficiently small, we can ensure $\rho(z) \geq |z|^2$ for all $z \in \Omega \cap B_{r_0}(0)$.
\end{proof}

We now state and prove the main boundary regularity result.

\begin{lem}\label{BoundaryHolder-manifold}
Let $\Omega$ be a strictly pseudo-convex smooth domain in $(X,\omega)$. Let $f\in L^p(\Omega,\omega^n)$ for some $p>1$
and let $\varphi\in C^{\alpha}(\pom)$ for some $\alpha \in (0,1)$. 
Suppose $u\in W^{2,1}(\Omega)$ is a solution to the Dirichlet problem \eqref{CMA-manifold}. 
Then, for  $\beta=\min\{\beta',\frac{\alpha}{2+\alpha}\}$ with $0<\beta'<\gamma_0$, there 
exists a constant $C$ which depends only on $n$, $p$, $\beta$, $\Omega$, $\left\|f\right\|_{L^p(\Omega,\omega^n)}$ and $\left\|\varphi\right\|_{C^{\alpha}(\pom)}$
 such that  
 \begin{equation}\label{bdy-improve}
 \left|{u(x)-u(y)}\right|\leq C \text{dist}(x,y)^{\beta}, \ \ \forall x\in \Omega, \ y\in \pom.
\end{equation}
\end{lem}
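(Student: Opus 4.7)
The plan is to bound $|u(x)-u(y_0)|$ for each $y_0\in\partial\Omega$ and $x\in\Omega$ close to $y_0$ by constructing plurisubharmonic two-sided barriers at $y_0$ and invoking the comparison principle for the complex Monge-Amp\`ere equation; we work in a holomorphic coordinate chart centered at $y_0$ (using the exponential map from Section \ref{reg-lem} in the manifold case). Let $\psi_0\in \mathrm{PSH}(\Omega)\cap C(\bar\Omega)$ solve $(dd^c\psi_0)^n=f\omega^n$ with $\psi_0|_{\partial\Omega}=0$; by Theorem \ref{WWZThm} it is $L^\infty$-bounded, and by the zero-boundary-data variant of the GKZ argument it satisfies $|\psi_0(x)|\leq C|x-y_0|^{\beta'}$ for any $\beta'<\gamma_0$. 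This function will carry the $\beta'$-contribution to the final exponent.

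For the lower bound $u(x)-u(y_0)\geq -C|x-y_0|^\beta$, apply Lemma \ref{defofrho} to obtain $\rho$ with $-\rho$ plurisubharmonic and $\rho(z)\geq|z|^2$ on $\Omega\cap B_{r_0}(y_0)$. For $s\in(0,1)$, the function $-\rho(z-y_0)^s$ is plurisubharmonic on $\{\rho\geq 0\}$, because $-(-\phi)^s$ is plurisubharmonic whenever $\phi$ is plurisubharmonic with $\phi\leq 0$ (applied to $\phi=-\rho$). Taking a maximum with a suitable multiple of a global strictly plurisubharmonic defining function of $\Omega$ (equivalently, localizing the comparison to $D:=\Omega\cap B_{r_0}(y_0)$ and using the $L^\infty$ bound of $u$ on the lateral part $\Omega\cap\partial B_{r_0}(y_0)$), one obtains a globally plurisubharmonic function $g_{y_0}\in\mathrm{PSH}(\Omega)$ satisfying $g_{y_0}(y_0)=0$, $g_{y_0}(y)\leq-c|y-y_0|^{2s}$ for $y\in\partial\Omega$ near $y_0$, and $g_{y_0}(x)\geq-C|x-y_0|^{s}$ for $x\in\Omega$ near $y_0$ (using $\rho(z)\lesssim|z|$ from smoothness). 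Setting $s=\alpha/(2+\alpha)$ (so that $2s<\alpha$ with a strict gap that makes the gluing go through) and choosing $A=A(\|\varphi\|_{C^\alpha})$ large, define
\begin{equation*}
v_{y_0}(z):=\varphi(y_0)+\psi_0(z)+A\,g_{y_0}(z).
\end{equation*}
Then $v_{y_0}\in\mathrm{PSH}(\Omega)$, $v_{y_0}|_{\partial\Omega}\leq\varphi$ (near $y_0$ by the boundary decay of $g_{y_0}$ together with $2s\leq\alpha$; away from $y_0$ by the gluing), and $(dd^c v_{y_0})^n\geq (dd^c\psi_0)^n=f\omega^n$ as positive measures, because every mixed term in the binomial expansion of $(dd^c(\psi_0+A g_{y_0}))^n$ is nonnegative. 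The comparison principle therefore gives $v_{y_0}\leq u$ on $\Omega$, and combining with $v_{y_0}(y_0)=\varphi(y_0)=u(y_0)$ yields
\begin{equation*}
u(x)-u(y_0)\;\geq\;\psi_0(x)+A\,g_{y_0}(x)\;\geq\;-C\bigl(|x-y_0|^{\beta'}+|x-y_0|^{\alpha/(2+\alpha)}\bigr),
\end{equation*}
which is the desired bound with $\beta=\min\{\beta',\alpha/(2+\alpha)\}$. The reverse inequality $u(x)-u(y_0)\leq C|x-y_0|^\beta$ follows by comparison with the Perron envelope $H(z):=\sup\{w\in\mathrm{PSH}(\Omega):w^*|_{\partial\Omega}\leq\varphi\}$, which is plurisubharmonic with $(dd^c H)^n=0$ and $H|_{\partial\Omega}=\varphi$, hence $u\leq H$; a dual upper barrier using $-A g_{y_0}$ plus a uniform constant (no $\psi_0$ is needed since $(dd^c H)^n=0$) produces the matching Hölder modulus of $H$ at $y_0$.

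The main technical obstacle is producing the globally plurisubharmonic barrier $g_{y_0}$ with boundary decay $g_{y_0}(y)\leq-c|y-y_0|^{2s}$ on \emph{all} of $\partial\Omega$, not just near $y_0$, while preserving plurisubharmonicity across the transition where the local expression $-A\rho^s$ (valid only where $\rho\geq 0$) gets patched to a global defining-function term; a naive maximum with a psh defining function flattens the boundary decay and a naive maximum with a negative constant may fail to be psh at the gluing interface. Getting this patching right is what pins down the choice $s=\alpha/(2+\alpha)$ rather than the nominally better $s=\alpha/2$: the strict gap $2s<\alpha$ provides the slack needed to absorb the gluing error in the boundary estimate. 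A secondary challenge is transferring the whole argument to the Hermitian manifold setting, which requires performing the construction through holomorphic coordinate charts at each boundary point and connecting them to the exponential-map regularizations of Section \ref{reg-lem}.
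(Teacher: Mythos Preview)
Your decomposition is essentially the classical one described in Remark~\ref{remarkonexponent}: you offload the $L^p$ right-hand side onto a zero-boundary solution $\psi_0$ on all of $\Omega$ and then handle the H\"older boundary datum with a plurisubharmonic barrier $g_{y_0}$. The $-\rho^s$ construction for $g_{y_0}$ is a reasonable variant, but the crux---and the gap---is the claim that $|\psi_0(x)|\leq C|x-y_0|^{\beta'}$ for any $\beta'<\gamma_0$. The GKZ/Charabati argument you invoke yields only $\psi_0\in C^{\alpha''}$ for $\alpha''<2\gamma_n=\frac{2}{np^*+1}$, and a direct comparison shows $2\gamma_n<\gamma_0$ whenever $n\geq 3$. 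Getting the boundary exponent $\gamma_0^-$ for $\psi_0$ is exactly the new content of the lemma in the special case $\varphi\equiv 0$, so invoking it here is circular; and your own barrier scheme applied to $\psi_0$ (where $L=0$) degenerates to the triviality $Ag_{y_0}\leq 0$ and gives no decay at all.

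The paper avoids this by \emph{not} splitting off a global $\psi_0$. Instead, for each $x_0\in\Omega$ it localizes to the small domain $\Omega\cap B_r(y_0)$ with $r=|x_0-y_0|^{(1-\beta)/2}$, shows that $h=u+Lr^\alpha+\frac{M}{r^2}\rho\geq 0$ on all of $\partial(\Omega\cap B_r)$ (the linear $\rho$ with coefficient $M/r^2$ handles the interior lateral boundary, no fractional power needed), and compares with the zero-boundary Monge--Amp\`ere solution $v$ on $\Omega\cap B_r$. The $L^\infty$ estimate of Theorem~\ref{WWZThm} applied \emph{on that small set} produces the volume factor $|\Omega\cap B_r|^\delta\lesssim r^{2n\delta}$; choosing $2n\delta=\frac{2\beta}{1-\beta}$ only needs $\delta<\frac{1}{np^*}$, i.e.\ $\beta<\gamma_0$, bypassing the stability exponent $\gamma_n$ entirely. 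It is this $x_0$-dependent choice of scale $r$ that converts the uniform $L^\infty$ bound into a pointwise decay of order $\gamma_0^-$; if you want to salvage your approach you would have to supply precisely this localized estimate for $\psi_0$, at which point you may as well run it directly for $u$.
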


\begin{proof}
We prove the lemma for $\Omega \subset \mathbb{C}^n$; the general case on a manifold follows by working in local coordinate charts covering $\partial\Omega$.

Without loss of generality, assume $y=0\in\pom$ and $u(0)=0$. Let $\rho$ and $r_0$ be  be the function and radius from Lemma \ref{defofrho}. Let $M=\left|\inf_{\Omega}u\right|$. By Theorem \ref{WWZThm}, $M$ is bounded by a constant depending only
on $n$, $p$, $\Omega$, $\left\|f\right\|_{L^p(\Omega)}$, $\left\|\varphi\right\|_{C^0(\pom)}$.
Let 
\[L=\sup_{x,y\in\pom}\frac{\left|\varphi(x)-\varphi(y)\right|}{\left|x-y\right|^{\alpha}}.\]
Fix $x_0\in \Omega$. Our goal is to estimate $|u(x_0)|$.

Let 
\[r=\left|x_0\right|^{\frac{1-\beta}{2}},\ \ 
\epsilon=Lr^{\alpha}, \ \ A=\frac{M}{r^2}.\] 
Here, $\beta \in (0,1)$ is an exponent to be determined later in terms of $\alpha$ and $p$.

If $r\geq r_0$, then $\left|x_0\right|\geq r_0^{\frac{2}{1-\beta}}$, and we have the trivial estimate
\beq\label{bdy-est1}
|u(x_0)|\leq Mr_0^{\frac{-2\beta}{1-\beta}}\left|x_0\right|^{\beta}.
\eeq

Now assume $r\leq r_0$.
Consider the function
\[h(z)=u(z)+\epsilon+A\rho(z)\geq 0.\]
We verify that $h \geq 0$ on $\partial(\Omega \cap B_r(0))$.

\begin{itemize}
\item On $\partial\Omega \cap B_r(0)$: We have $u(z) = \varphi(z)$ and $|\varphi(z)| \leq L |z|^{\alpha} \leq L r^{\alpha} = \epsilon$. Since $\rho(z) \geq 0$ for $z \in \overline{\Omega}$ (by Lemma \ref{defofrho}, as $\rho(z) \geq |z|^2 \geq 0$), it follows that $h(z) \geq \varphi(z) + \epsilon \geq 0$.
\item On $\Omega \cap \partial B_r(0)$: We have $|z|= r$. Since $\rho(z) \geq |z|^2 = r^2$ and $u(z) \geq -M$, we get
\[h(z)\geq -M+Ar^2+\epsilon=-M+M+\epsilon=\epsilon\geq 0.\]
\end{itemize}
Thus, $h \geq 0$ on the boundary of $\Omega \cap B_r(0)$.
Now let $v$ be the solution to
\begin{equation}
\label{LCMA-dir-1}
  \left\{ 
  \begin{alignedat}{2}(dd^cv)^n~& =f\,d\mu~&&\text{\ in} ~\ \ \Omega\cap B_r(0), \\\
 u ~&=0~&&\text{\ on}~\ \  \partial(\Omega\cap B_r(0)).
\end{alignedat}
\right.
\end{equation}
By Theorem \ref{WWZThm}, we obtain the following estimate for $v$ inside $\Omega \cap B_r(0)$.
\begin{equation}\label{abc}
v\geq -C\|f\|_{L^p(\Omega)}^{\frac{1}{n}}\cdot\left|\Omega\cap B_r(0)\right|^{\delta}\geq -C\|f\|_{L^p(\Omega)}^{\frac{1}{n}}\cdot r^{2n\delta}.
\end{equation}
This estimate is valid for any $0 < \delta < \frac{1}{n p^*}$. We now choose $\delta$ to optimize the H\"older exponent. 
Let us set
\[2n\delta=\frac{2\beta}{1-\beta}.\]
The condition $\delta < \frac{1}{n p^*}$ then becomes
\[\frac{2\beta}{1-\beta}< \frac{2}{p^*},\]
i.e.,
\beq\label{beta-1}
\beta<\gamma_0=\frac{p-1}{2p-1}.
\eeq
Thus, under the assumption $\beta < \gamma_0$, we have
\begin{equation}\label{v-estimate}
v(z) \geq - C |f|_{L^p(\Omega)}^{\frac{1}{n}} \cdot r^{\frac{2\beta}{1-\beta}} \quad \text{for all } z \in \Omega \cap B_r(0).
\end{equation}

Since $(dd^c (v-A\rho))^n\geq (dd^c v)^n = (dd^c u)^n = f \,d\mu$ in $\Omega \cap B_r(0)$ and $v \leq 0 \leq h=u+\epsilon+A\rho$ on the boundary, the comparison principle implies that $v-A\rho \leq u+\epsilon$ in $\Omega \cap B_r(0)$. In particular, at the point $x_0$, we have
\begin{eqnarray}
  u(x_0)&\geq& -\epsilon-A\rho(x_0)-C\|f\|_{L^p(\Omega)}^{\frac{1}{n}}\cdot r^{\frac{2\beta}{1-\beta}}\nonumber\\
  &\geq& -Lr^{\alpha}-C\frac{M}{r^2}\left|x_0\right|-Cr^{\frac{2\beta}{1-\beta}}\geq -C\left|x_0\right|^{\beta} \label{bdy-est2}
\end{eqnarray}
provided that $\frac{2\beta}{1-\beta}\leq \alpha$, i.e.,
\beq\label{beta-2}
\beta\leq \frac{\alpha}{2+\alpha}.
\eeq
Here we have used $\rho(x_0)\leq C\left|x_0\right|$.
Combining \eqref{bdy-est1} and \eqref{bdy-est2} we have
\[u(x_0)\leq C\left|x_0\right|^{\beta}.\] 
with $\beta=\min\{\beta',\frac{\alpha}{2+\alpha}\}$ for any $0<\beta'<\gamma_0$ by \eqref{beta-1}, \eqref{beta-2}.
The proof is complete. The constant $C$ depends on the parameters stated in the lemma.
\end{proof}

\begin{rem}\label{remarkonexponent}
In previous works \cite{GKZ, Ch1, Ch2, BKPZ}, the barrier function was constructed as a decomposition into
a vanishing boundary problem 
\[  \left\{ 
  \begin{alignedat}{2}(dd^cv)^n~& =f\,d\mu~&&\text{\ in} ~\ \ B, \\\
 u ~&=0~&&\text{\ on}~\ \  \partial B,
\end{alignedat}
\right.
\]
and a homogeneous problem
\[
  \left\{ 
  \begin{alignedat}{2}(dd^cw)^n~& =0\,d\mu~&&\text{\ in} ~\ \ \Omega, \\\
 w ~&=\varphi-v~&&\text{\ on}~\ \  \pom,
\end{alignedat}
\right.
\]
Here $B$ is a ball containing $\bar\Omega$, and $\tilde f=\begin{cases}
f & \ \ \ \  \text{in $\Omega$,}\\
0 & \ \ \ \  \text{in $B\setminus\Omega$.}
\end{cases}$
Then $v\in C^{2\gamma}(\bar\Omega)$ for $\gamma<\gamma_0$. It follows that $w\in C^{\min\{\frac{\alpha}{2}, \gamma\}}(\bar\Omega)$.
 This approach typically yields a $C^{\min\{\frac{\alpha}{2}, \gamma\}}$-barrier $v+w$ for \eqref{LCMA-dir}.
In our proof above, we give a more direct construction of the barrier which is tightly adapted to the boundary geometry and the boundary values, allowing us to utilize the sharp $L^{\infty}$-estimate of Theorem \ref{WWZThm} more effectively.
\end{rem}

\begin{cor}
The boundary H\"older estimate \eqref{bdy-improve} holds for the exponent
\[\beta=\max\{\min\{\gamma'',\frac{\alpha}{2+\alpha}\},\min\{\frac{\alpha}{2},\gamma'\}\}\] with
$0<\gamma'<\gamma_n$, $0<\gamma''<\gamma_0$. In particular, 
the estimate holds for 
\begin{itemize}
\item $\beta<\gamma_0$ when $\frac{\alpha}{2+\alpha}\geq \gamma_0$; 
\item  $\beta=\frac{\alpha}{2+\alpha}$ when $\gamma_n\leq \frac{\alpha}{2+\alpha}< \gamma_0$;
\item  $\beta=\min\{\frac{\alpha}{2},\gamma'\}$ with
$0<\gamma'<\gamma_n$, when $\frac{\alpha}{2+\alpha}<\gamma_n$.
\end{itemize}
\end{cor}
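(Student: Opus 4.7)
The plan is to verify that the stated exponent $\beta$ is simply the pointwise maximum of two independent boundary H\"older exponents, and then to read off the three special cases by a direct comparison. No new analytic input beyond Lemma \ref{BoundaryHolder-manifold} and the classical barrier decomposition recalled in Remark \ref{remarkonexponent} is required.

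The first candidate, $\beta_1=\min\{\gamma'',\frac{\alpha}{2+\alpha}\}$ with $\gamma''<\gamma_0$, is exactly the output of Lemma \ref{BoundaryHolder-manifold}. For the second candidate, $\beta_2=\min\{\frac{\alpha}{2},\gamma'\}$ with $\gamma'<\gamma_n$, I would run the classical construction sketched in Remark \ref{remarkonexponent}: fix a ball $B\supset\bar\Omega$, extend $f$ by zero to $B$ as $\tilde f$, and let $v$ solve $(dd^c v)^n=\tilde f\,d\mu$ on $B$ with $v|_{\p B}=0$, so that $v\in C^{2\gamma}(\bar B)$ for every $\gamma<\gamma_0$ by \cite{GKZ, Ch1}. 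Let $w$ be the maximal plurisubharmonic extension of $\varphi-v|_{\p\Omega}$ to $\Omega$ (the Bedford--Taylor solution of the homogeneous complex Monge--Amp\`ere equation with that boundary data); since $\varphi-v|_{\p\Omega}\in C^{\min\{\alpha,2\gamma\}}(\p\Omega)$, one obtains $w\in C^{\min\{\alpha/2,\gamma\}}(\bar\Omega)$. The comparison principle applied to $u$ against the barrier $v+w$ then yields \eqref{bdy-improve} with exponent $\min\{\alpha/2,\gamma\}$ for any $\gamma<\gamma_0$; restricting to $\gamma'<\gamma_n$, the range naturally paired with the interior estimate in Theorem \ref{hold1}, delivers $\beta_2$.

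With both exponents in hand, the main assertion is $\beta=\max\{\beta_1,\beta_2\}$. The three bullet points follow by unpacking this maximum: when $\frac{\alpha}{2+\alpha}\geq\gamma_0$, $\beta_1$ can be pushed arbitrarily close to $\gamma_0$ while $\beta_2$ is trapped below $\gamma_n<\gamma_0$, so $\beta$ may be any value below $\gamma_0$; when $\gamma_n\leq\frac{\alpha}{2+\alpha}<\gamma_0$, $\beta_1$ saturates at $\frac{\alpha}{2+\alpha}$ and still dominates $\beta_2<\gamma_n$; when $\frac{\alpha}{2+\alpha}<\gamma_n$, $\beta_2$ can be chosen strictly above $\frac{\alpha}{2+\alpha}$, so it takes over. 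Since the two analytic estimates are already available, I do not anticipate a substantive obstacle; the only care needed is bookkeeping of the constraint ranges on $\gamma'$ and $\gamma''$ so that the maximum is genuinely realized inside them in each regime.
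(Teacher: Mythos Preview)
Your proposal is correct and mirrors exactly what the paper intends: the corollary has no explicit proof in the paper because it is meant to follow immediately by taking the better of the two boundary exponents, namely $\beta_1=\min\{\gamma'',\alpha/(2+\alpha)\}$ from Lemma~\ref{BoundaryHolder-manifold} and $\beta_2=\min\{\alpha/2,\gamma'\}$ from the classical barrier decomposition recalled in Remark~\ref{remarkonexponent}. Your case analysis of the three bullet points is also the intended one.
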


\vskip 20pt

\section{Proof of Theorem \ref{LCMA-dir}}

In this section, we present the proof of Theorem \ref{LCMA-dir}. A crucial tool is the following stability estimate, first established in $\mathbb{C}^n$ by \cite[Theorem 1.1]{GKZ} and later extended to Hermitian manifolds by \cite{EGZ, GGZ}.

\begin{theo}\cite{GKZ, EGZ, GGZ}\label{prep-holder}
Let $\Omega$ be a relative compact open set in a Hermitian manifold $(X,\omega)$. 
Let $u$, $v$ be bounded plurisubharmonic functions in $\Omega$ 
satisfying $u\geq v$ on $\p \Omega$. 
Assume that 
\[(dd^cu)^n=f\,d\mu,  \  \text{with\ } 0\leq f\in L^p(\Omega) \text{\ for some\ } p>1,\] 
where $\mu$ is the volume form associated to $\omega$. 
Then for $r\geq 1$ and  any $\gamma$ satisfying $0\leq \gamma<\frac{r}{np^*+r}$ (where $1/p + 1/p^* = 1$), we have
\beq\label{vu}
\sup\limits_{\Omega}\{v-u\}\leq C\|\max\{v-u,0\}\|_{L^r(\Omega)}^{\gamma}
\eeq
The constant $C > 0$ depends uniformly on $\gamma$, $\|f\|_{L^p(\Omega)}$ and $\|v\|_{L^{\infty}(\Om)}$. 
\end{theo}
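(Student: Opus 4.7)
The plan is to adapt the classical Ko\l odziej stability argument from \cite{K2, GKZ}, with the Hermitian extension following \cite{EGZ, GGZ}. The proof has three ingredients: (i) a comparison-principle inequality bounding the Monge-Amp\`ere capacity of super-level sets of $v - u$; (ii) a Chebyshev--H\"older bound using $f \in L^p$; (iii) Ko\l odziej's iteration (``descent'') lemma.

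Set $M := \sup_\Omega(v - u)$ and $N := \|(v-u)_+\|_{L^r(\Omega)}$; we may assume $M > 0$. Write $E_s := \{v - u > s\}$. Applying the Bedford--Taylor comparison principle to the pair $(v, u + s + t\rho)$ for auxiliary $\rho \in \psh(\Omega)$ with $-1 \leq \rho \leq 0$ and for $0 < t \leq s$, and taking the supremum over such $\rho$, one obtains the Chern--Levine--Nirenberg-type estimate
\[t^n \, \mathrm{cap}(E_{s+t}) \leq \int_{E_s} (dd^c u)^n = \int_{E_s} f \, d\mu.\]
H\"older's inequality combined with Chebyshev's inequality $|E_s| \leq s^{-r} N^r$ then gives
\[t^n \, \mathrm{cap}(E_{s+t}) \leq C \|f\|_{L^p} \, s^{-r/p^*} N^{r/p^*}.\]
Setting $\phi(s) := \mathrm{cap}(E_s)^{1/n}$, which is decreasing and vanishes for $s > M$, this reads $t\,\phi(s+t) \leq C_1 N^{r/(np^*)} s^{-r/(np^*)}$.

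Ko\l odziej's descent lemma (cf.~\cite[Sec.~2]{K2}, or equivalently the iterative argument behind \cite[Thm.~1.1]{GKZ}) converts this functional inequality into a pointwise bound $M \leq C_2 N^{\gamma}$ for any $\gamma < \tfrac{r}{np^* + r}$. The argument iterates the inequality along a carefully chosen sequence $(s_k, t_k)$ tending to $M$, exploiting the a priori upper bound $\phi(0) \leq \mathrm{cap}(\Omega)^{1/n}$; the dependence of $C_2$ on $\|v\|_{L^\infty}$ enters through this initial bound and through the boundary comparison. In the Hermitian setting, the only modification occurs in the comparison step: extra torsion terms from $d\omega \neq 0$ are absorbed by adding a smooth $\omega$-plurisubharmonic correction $A\chi$ for a suitable constant $A$, as in \cite{EGZ, GGZ}; the final constants depend on $\omega$ through the geometry of $\chi$.

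The main technical obstacle is the quantitative form of the descent lemma achieving the sharp exponent $\tfrac{r}{np^* + r}$: the iteration requires careful parameter tracking, especially in choosing the geometric progression rate for $s_k, t_k$, in order to recover the full admissible range of $\gamma$. A subtler point is ensuring $\phi(s) > 0$ for all $s < M$ (needed to turn the vanishing statement produced by the iteration into the desired bound on $M$); this is handled by a standard upper-semicontinuous regularization of $v - u$, since $v$ is PSH and $u$ is continuous by Ko\l odziej's $L^p$-regularity theorem.
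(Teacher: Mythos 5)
The paper does not prove Theorem~\ref{prep-holder}; it is quoted from \cite{GKZ} (for $\mathbb{C}^n$) and \cite{EGZ,GGZ} (Hermitian extension), so there is no internal proof to compare against. Judged on its own merits, your outline captures the right architecture (capacity estimate from comparison, Chebyshev/H\"older, De Giorgi/Ko\l odziej descent), but there is a genuine logical gap at the heart of step (iii). The descent lemma of Ko\l odziej operates on inequalities of the self-referential form $t\,\phi(s+t)\leq A\,\phi(s)^{1+\alpha}$ with $\alpha>0$; it is this feedback in $\phi$ that forces $\phi$ to vanish past a computable threshold. The inequality you feed it, $t\,\phi(s+t)\leq C_1 N^{r/(np^*)}s^{-r/(np^*)}$, has $s$ on the right rather than $\phi(s)$, and that alone cannot force $\phi$ to vanish at any finite level: any slowly decaying $\phi$ satisfies it, so no bound on $M$ follows. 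What is missing is the second, crucial ingredient from \cite{K2}: the volume--capacity domination for $L^p$ densities, namely that for every $\kappa>1$ there is $A_\kappa$ (depending only on $n$, $p$, $\|f\|_{L^p}$, $\mathrm{diam}\,\Omega$) with $\int_K f\,d\mu\leq A_\kappa\,\mathrm{cap}(K)^\kappa$, obtained from H\"older together with the exponential capacity--volume estimate $|K|\leq C\exp(-c/\mathrm{cap}(K)^{1/n})$. Combined with your capacity estimate this yields the closable inequality $t\,\phi(s+t)\leq A_\kappa^{1/n}\phi(s)^{\kappa}$, to which the descent lemma applies and gives $M\leq s_0+C\,\phi(s_0)^{\kappa-1}$ once $\phi(s_0)$ is small. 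The Chebyshev bound you derived is then used only to estimate $\phi(s_0)$ at a well-chosen $s_0$; balancing $s_0\sim N^\gamma$ and letting $\kappa\to\infty$ recovers the full range $\gamma<\frac{r}{np^*+r}$. Without the volume--capacity domination you cannot reach that exponent, and in fact cannot even conclude $M<\infty$ from the displayed inequality.

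A secondary slip: in the comparison step the auxiliary competitor $\rho$ must be attached to $v$, not to $u$. Taking $\phi=u$ and $\psi=v-s+t\rho$ (with $-1\leq\rho\leq 0$, $0<t\leq s$), one has $\phi\geq\psi$ near $\partial\Omega$ and $(dd^c\psi)^n\geq t^n(dd^c\rho)^n$, which after taking the supremum over $\rho$ yields exactly $t^n\,\mathrm{cap}(E_{s+t})\leq\int_{E_s}f\,d\mu$. Your choice $(v,\,u+s+t\rho)$ places $t\,dd^c\rho$ on the majorant side of the comparison inequality, where it cannot be discarded, and so does not produce the stated capacity bound; you wrote down the correct conclusion but from a setup that does not deliver it.
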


Our proof of Theorem \ref{LCMA-dir} follows the general framework of \cite{GKZ}, 
but we treat three distinct cases separately: the flat case ($\mathbb{C}^n$), the smooth manifold case, and the case of a space with isolated singularities.
 
\subsection{The flat case in $\mathbb{C}^n$} Assume $X$ is $\mathbb C^n$.  We employ the regularization $\hat{u}_\epsilon$ by \eqref{mo2}.
By Lemma \ref{BoundaryHolder-manifold}, for $\beta$ given by \eqref{beta},
we have the boundary estimate
\[\left|\hat{u}_{\epsilon}-u\right|\leq C{\epsilon}^{\beta} \text{\ \ on\ } \partial {\Omega}_{\epsilon},\] 
where the constant $C$ is independent of $\epsilon$. 
Since $\hat{u}_\epsilon$ is plurisubharmonic and majorizes $u$ (by the submean value property), we have $\hat{u}_\epsilon - u \geq 0$ in $\Omega_{\epsilon}$. Note that  $\hat{u}_{\epsilon}\in PSH(\Omega_{\epsilon})$.
Applying the stability estimate (Theorem \ref{prep-holder}) with $r=1$ to functions $\hat{u}_\epsilon$ and $u+C|\epsilon|^\beta$, we obtain
\begin{equation}\label{HR}
\sup_{\Omega_\epsilon} \{\hat u_\epsilon-u-C{\epsilon}^{\beta}\}\leq C\|\max \{\hat u_\epsilon-u-C{\epsilon}^{\beta},0\}\|_{L^1(\Omega_{\epsilon})}^{\gamma}.
\end{equation}
for $0<\gamma<\gamma_n$.
We now estimate the $L^1$-norm on the right-hand side. We compute
\begin{eqnarray}
\|\hat u_{\epsilon}-u\|_{L^1(\Omega_{\epsilon})}
&=& \int_{\Omega_{\epsilon}}(\hat u_{\epsilon}(x)-u(x))\,dx \nonumber\\[4pt]
&=& \int_{\Omega_{\epsilon}}\left(\frac{1}{\omega_{2n}\epsilon ^{2n}}\int_{B_{\epsilon}(x)}(u(y)-u(x))\,dy\right)\,dx\nonumber
\end{eqnarray}
A key observation is that the contribution from the interior cancels out. Precisely, by Fubini's theorem,
\[\int_{\Omega_{\epsilon}}\left(\frac{1}{\omega_{2n}\epsilon ^{2n}}\int_{B_{\epsilon}(x)\cap{\Omega}_{\epsilon}}(u(y)-u(x))\,dy\right)\,dx=0.\]
Therefore, the entire $L^1$ norm comes from the region where $B_{\epsilon}(x)\backslash {\Omega}_{\epsilon} \neq \emptyset$. Thus
\begin{eqnarray}
\left|\left|{\hat{u}}_{\epsilon}-u\right|\right|_{L^1(\Omega_{\epsilon})}&=& \int_{\Omega_{\epsilon}}\left(\frac{1}{\omega_{2n}\epsilon ^{2n}}\int_{B_{\epsilon}(x)\backslash {\Omega}_{\epsilon}}(u(y)-u(x))\,dy\right)\,dx\nonumber\\[4pt]
&\leq& C{\epsilon}^{\beta}\int_{\Omega_{\epsilon}\backslash \Omega_{2\epsilon}}\left(\frac{1}{\omega_{2n}\epsilon ^{2n}}\int_{B_{\epsilon}(x)\backslash {\Omega}_{\epsilon}}\,dy\right)dx\nonumber\\
&\leq& C{\epsilon}^{1+\beta}.
\end{eqnarray}
In the last inequality, the pointwise boundary H\"older estimate(Lemma \ref{BoundaryHolder-manifold}) is used again.
Substituting this estimate into \eqref{HR} yields
\[\sup_{\Omega_\epsilon}\{\hat{u}_{\epsilon}-u\}\leq C{\epsilon}^{\beta}+C{\epsilon}^{(1+\beta)\gamma}\] 
for $\gamma\in (0,\gamma_n)$. By Lemma \ref{elementarylemma},
An application of the elementary Lemma \ref{elementarylemma} then implies that $u \in C^{\alpha'}(\overline{\Omega})$ with exponent $\alpha' = \min\{ \beta, (1+\beta)\gamma \}$.

\subsection{The manifold case} 
Now assume $(X,\omega)$ is a complete Hermitian manifold. 
Let $\Omega$ be a bounded smooth strictly pseudo-convex open subset of $X$, that is, there exists a smooth plurisubharmonic defining function of $\Omega$ in a neighborhood of $\overline{\Omega}$.
We intend to use a similar strategy, but the standard convolution is not available. Instead, we use the regularized function $\tilde{u}_\epsilon$ defined in \eqref{reg-manifold}, which is well-defined on $\Omega_{\epsilon}=\{x\in\Omega|\text{dist}(x,\pom)>\epsilon\}$. Here $\text{dist}(\cdot,\cdot)$ is the distance with respect to $\omega$.
However, the functions $\tilde{u}_{\epsilon}$ are generally not plurisubharmonic in general. 
Following the approach in \cite{BD, DDG+}, we will therefore use the Kiselman transform to construct a plurisubharmonic regularization for the proof of Theorem \ref{LCMA-dir}. The following lemma is adapted from \cite[Lemma 1.12]{BD},  \cite[Lemma 4.1]{KN2} and \cite[Lemma 3.1]{DT}.

\begin{lem}\label{BDLemma}
Let $u \in L^{\infty}(\Omega)$ be a bounded quasi-plurisubharmonic function such that $dd^c u \geq \chi$ for a smooth real $(1,1)$-form $\chi$ on $\Omega$. Let $\tilde{u}_{\epsilon}$ be its regularization defined in \eqref{reg-manifold}. Define the Kiselman-Legendre transform at level $c > 0$ by
\beq\label{Kiselman-trans}
u_{c,\epsilon}=\inf_{t\in (0,\epsilon)} \left\{\tilde{u}_{\epsilon}+Kt^2-K{\epsilon}^2-c\log\left(\frac{t}{\epsilon}\right)\right\},
\eeq
there exists a constant $K > 0$ (depending on the curvature of $\omega$, $\chi$, and $\|u\|_{L^\infty(\Omega)}$) and $\epsilon_0 > 0$ such that for all $\epsilon \in (0, \epsilon_0)$:
\begin{enumerate}
\item  The function $\tilde{u}_{\epsilon}+K{\epsilon}^2$ is increasing in $\epsilon$.
\item  The complex Hessian satisfies the estimate:
\beq\label{psh-uce}
dd^cu_{c,\epsilon}\geq \chi -(A\min\{c,\lambda(z,\epsilon)\}+K{\epsilon})\omega,
\eeq
where $A$ is a lower bound for the bisectional curvature of $\omega$, and
\beq\label{lambdazt}
\lambda(z,t)=\frac{\partial}{\partial \log t}(\tilde{u}_{t}+Kt^2).
\eeq
\end{enumerate}
\end{lem}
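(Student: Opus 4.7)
The plan is to follow the Kiselman--Legendre transform technique of \cite{BD}, as extended to the non-K\"ahler Hermitian setting by \cite{DT}. I interpret the infimum in \eqref{Kiselman-trans} as being over $\tilde u_t$ rather than $\tilde u_\epsilon$, which matches the cited references and makes the infimum nontrivial. The proof splits naturally into a monotonicity assertion for the scale parameter $t$ and a pointwise Hessian estimate for $\tilde u_t$, which together feed into Kiselman's minimum principle to yield \eqref{psh-uce}.

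For part (1), I would compute $\lambda(z,t)=\partial_{\log t}(\tilde u_t+Kt^2)$ directly from \eqref{reg-manifold}. After the change of variable $\xi=t\zeta$, the $t$-derivative acts only on $u\circ\exp_z(t\zeta)$; Taylor-expanding $u\circ\exp_z$ to second order and integrating by parts against the radial weight $\eta(|\zeta|^2)$ produces a leading term proportional to $t^2\,\tr_\omega dd^c u$ averaged over the unit ball, together with $O(t^3)$ errors coming from the second fundamental form of the exponential map and from the torsion of the Chern connection. Using $dd^c u\geq\chi$ and $\|u\|_{L^\infty}\leq M$, these errors are dominated by $Ct^2$ with $C$ depending on the curvature of $\omega$, on $\chi$, and on $M$; choosing $K$ large enough then forces $\lambda(z,t)\geq 0$ on $\Omega_\epsilon$.

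For part (2), the crucial pointwise estimate is
\[dd^c_z\tilde u_t\;\geq\;\chi-\bigl(A\lambda(z,t)+Kt\bigr)\,\omega,\]
obtained by differentiating \eqref{reg-manifold} twice in $z$ in normal coordinates at the base point. The second variation of $u\circ\exp_z(t\zeta)$ decomposes into $dd^c u$ (which is $\geq\chi$) plus a bisectional-curvature contribution that couples to the radial second derivative of the spherical average, and this radial derivative is precisely $\lambda(z,t)$ up to the $Kt^2$ adjustment from part (1); this produces the $A\lambda(z,t)\omega$ term, while the linear-in-$t$ residue from torsion of the Chern connection is absorbed into $Kt\omega$. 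Granted this, I would apply Kiselman's minimum principle to $\Phi(z,s)=\tilde u_{e^s\epsilon}(z)+Ke^{2s}\epsilon^2-K\epsilon^2-cs$, complexified via $w=s+i\theta$ and independent of $\theta$: the monotonicity from part (1) together with the penalization $-cs$ forces the infimum to be attained either at an interior point where $\lambda(z,t_*)=c$, or at the endpoint $t_*=\epsilon$ where $\lambda(z,\epsilon)\leq c$. In both cases $A\lambda(z,t_*)\leq A\min\{c,\lambda(z,\epsilon)\}$ and $Kt_*\leq K\epsilon$, so the Hessian lower bound transfers to $u_{c,\epsilon}$ in the form \eqref{psh-uce}.

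The principal technical difficulty is the Hessian bound in part (2): on a non-K\"ahler Hermitian manifold, the Jacobi fields of $\exp_z$ and the torsion of the Chern connection produce mixed error terms that must be separated cleanly into a piece coupling to the radial derivative $\lambda(z,t)$ (contributing the constant $A$) and a residual piece of order $t$ (absorbed into $Kt$). This separation is essentially the content of \cite[Lemma 1.12]{BD} and \cite[Lemma 3.1]{DT}, whose computations I would follow closely rather than redo from scratch.
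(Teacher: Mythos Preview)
The paper does not supply its own proof of this lemma: it is stated as ``adapted from \cite[Lemma 1.12]{BD} and \cite[Lemma 3.1]{DT}'' and then used as a black box. Your proposal follows exactly those two references, so your approach coincides with the paper's by construction; in particular your correction that the infimum in \eqref{Kiselman-trans} must run over $\tilde u_t$ rather than $\tilde u_\epsilon$ is right (otherwise the transform is a trivial shift of $\tilde u_\epsilon$), and your sketch of the monotonicity-plus-Kiselman-minimum-principle argument is the standard one from \cite{BD,DT}.
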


We now proceed with the proof of the main theorem in the manifold setting.

\begin{proof}[Proof of Theorem \ref{hold1}]
Following \cite{DDG+}, we write
\[\tilde{u}_{\epsilon}(z)=\int_{x\in X} u(x)\eta\left(\frac{|\log_zx|^2_\omega}{\epsilon^2}\right)\,\frac{dV_\omega(\log_z x)}{\epsilon^{2n}}=\int_{x\in X} u(x) K_\epsilon(z, x),\]
where  $x\rightarrow \xi=\log_z x$ is the inverse of $\xi\to x=\exp_z{\xi}$ and 
\[K_\epsilon(z, x)=\eta\left(\frac{|\log_zx|^2_\omega}{\epsilon^2}\right)\,\frac{dV_\omega(\log_z x)}{\epsilon^{2n}}\]
is the semipositive $(n, n)$-form on $X\times X$ which
is the pull-back of the form $\rho_\epsilon\left(\frac{|\xi|^2_\omega}{\delta^2}\right) \,dV_\omega(\xi)$ by
$(z, x)\to \xi=\log_zx$.

By Lemma \ref{BoundaryHolder-manifold}, we have the boundary estimate
\begin{equation}\label{bdy-ep}
\left|\tilde{u}_{\epsilon}-u\right|\leq C{\epsilon}^{\beta} \text{\ \ on\ } \partial {\Omega}_{\epsilon},
\end{equation} 
where the constant $C$ is independent of $\epsilon$.
Choose $K > 0$ as in Lemma \ref{BDLemma} and fix a constant $c > 0$ (to be determined later). By Lemma \ref{BDLemma}, the Kiselman transform $u_{c,\epsilon}$ satisfies
$$dd^c{u}_{c,\epsilon}\geq 
-(Ac+K{\epsilon})\omega\,\,  \text{in} \,\, \Omega_{\epsilon},$$ 
Let $\rho$ be a strictly plurisubharmonic defining function for $\Omega$ such that $dd^c \rho \geq \omega$. Then the function
$$v(z)=(Ac+K\epsilon)\rho(z)+u_{c,\epsilon}(z)$$
is plurisubharmonic in $\Omega_{\epsilon}$. 
Furthermore,  from the definition \eqref{Kiselman-trans}, we have the bounds
\begin{equation}\label{hat-u}
u-K{\epsilon}^2\leq {u}_{c,\epsilon}\leq \tilde{u}_{\epsilon}\,\,  \text{in} \,\, \Omega_{\epsilon}.
\end{equation}
Since $\rho \leq 0$ in $\Omega$, estimates \eqref{bdy-ep} and \eqref{hat-u} imply that on $\partial \Omega_{\epsilon}$
\[v(z)-u(z)-C\epsilon^{\beta}\leq 0.\]
Applying the stability estimate (Theorem \ref{prep-holder}) with $r=1$ to $v$ and $u + C \epsilon^\beta$ yields
\begin{eqnarray}
&&\sup_{\Omega_\epsilon} \{u_{c,\epsilon}+(Ac+K\epsilon)\rho-u-C\epsilon^{\beta}\}\nonumber\\
&\leq& C'\|\max\{u_{c,\epsilon}+(Ac+K\epsilon)\rho-u-C{\epsilon}^{\beta},0\}\|_{L^1(\Omega_{\epsilon}, \omega^n)}^{\gamma}\nonumber\\
&\leq& C'\|\max\{u_{c,\epsilon}-u-C{\epsilon}^{\beta},0\}\|_{L^1(\Omega_{\epsilon}, \omega^n)}^{\gamma}
\label{HR1}
\end{eqnarray}
for $0<\gamma<\gamma_n$.
We now estimate the $L^1$-norm on the right-hand side. Using \eqref{hat-u}, we have
\begin{eqnarray*}
\left|\left|\max\{u_{c,\epsilon}-u-C\epsilon^{\beta},0\}\right|\right|_{L^1(\Omega_{\epsilon}, \omega^n)}
&\leq& \int_{\Omega_{\epsilon}}(u_{c,\epsilon}-u+K\epsilon^2)\, \omega^n\nonumber\\[4pt]
&\leq& \int_{\Omega_{\epsilon}}(\tilde{u}_{\epsilon}-u+K\epsilon^2)\,\omega^n.
\end{eqnarray*}
Although we cannot cancel all the terms inside $\Omega_{\epsilon}$ like in the $\mathbb{C}^n$ case, the error terms are of $O(\epsilon^2)$. In fact, by the computations in Lemma 2.3 in \cite{DDG+}, we have
\begin{eqnarray*}
\int_{\Omega_{\epsilon}}(\tilde{u}_{\epsilon}-u)\,dV_\omega
&=&\int_{(x,z)\in \Omega\times \Omega_{\epsilon}}(u(x)-u(z))K_{\epsilon}(z,x)\wedge dV_{\omega}(z)\nonumber\\
&=&\int_{(x,z)\in (\Omega\backslash\Omega_{\epsilon})\times \Omega_{\epsilon}}(u(x)-u(z))K_{\epsilon}(z,x)\wedge dV_{\omega}(z)\nonumber\\
&&+\int_{(x,z)\in \Omega_{\epsilon}\times\Omega_{\epsilon}}u(x)(K_{\epsilon}(z,x)\wedge dV_{\omega}(z)-K_{\epsilon}(x,z)\wedge dV_{\omega}(x)).
\end{eqnarray*}
By the boundary H\"older estimate (Lemma \ref{BoundaryHolder-manifold}), 
we have $|u(x) - u(z)| \leq C \epsilon^\beta$ for $x \in \Omega \setminus \Omega_{\epsilon}$ and $z \in \Omega_{\epsilon}$ with $\text{dist}(z, x) = O(\epsilon)$. This implies the first term is bounded by $C\epsilon^{1+\beta}$. 
By \cite[Lemma 2.4]{DDG+}, we have
\begin{equation}
\left|K_{\epsilon}(z,x)\wedge dV_{\omega}(z)-K_{\epsilon}(x,z)\wedge dV_{\omega}(x)\right|\leq C{\epsilon}^{2-2n}dV_{\omega}(z)\wedge dV_{\omega}(x).
\end{equation}
Hence the second term in the expression is bounded by $C{\epsilon}^2$. Therefore
\begin{equation}\label{HR-c}
\left|\left|\max\{u_{c,\epsilon}-u-C\epsilon^{\beta},0\}\right|\right|_{L^1(\Omega_{\epsilon}, \omega^n)}\leq C\epsilon^{1+\beta}.
\end{equation}
Substituting \eqref{HR-c} into \eqref{HR1} gives
\beq\label{estimate1-manifold}
\sup_{\Omega_\epsilon}\{u_{c,\epsilon}+(Ac+K\epsilon)\rho-u\}\leq C{\epsilon}^{\beta}+C{\epsilon}^{(1+\beta)\gamma}
\eeq
for $\gamma\in (0,\gamma_n)$. 

Observe that for any fixed point $z$, as $t\rightarrow 0+$,
\[\tilde{u}_{\epsilon}(z)+Kt^2-K{\epsilon}^2-c\log\left(\frac{t}{\epsilon}\right)\rightarrow +\infty.\] 
Hence there exist a $t_{\text{min}}\in (0,\epsilon]$ such that the infimum is attained, i.e. 
\[u_{c,\epsilon}(z)=\tilde{u}_{t_{\text{min}}}(z)+Kt^2_{\text{min}}-K{\epsilon}^2-c\log \left(\frac{t_{\text{min}}}{\epsilon}\right).\]
From \eqref{estimate1-manifold} and the fact that $\rho(z) \leq 0$, we have:
\[\tilde{u}_{t_{\text{min}}}(z)+Kt^2_{\text{min}}-K{\epsilon}^2-c\log \left(\frac{t_{\text{min}}}{\epsilon}\right)\leq u(z)-(Ac+K\epsilon)\rho(z)+C\epsilon^{\beta}+C\epsilon^{(1+\beta)\gamma}.\] 
By Lemma \ref{BDLemma}, we know $\tilde{u}_{t_{\text{min}}}(z)+Kt^2_{\text{min}}\geq u(z)$. 
Therefore, we obtain
\begin{equation}\label{tmin}
-c\log\left(\frac{t_{\text{min}}}{\epsilon}\right)\leq K\epsilon^2-(Ac+K\epsilon)\rho(z)+C\epsilon^{\beta}+C\epsilon^{(1+\beta)\gamma}.
\end{equation}
Now  we choose $c=\epsilon^{\alpha'}$ where $\alpha'=\min\{\beta,(1+\beta)\gamma\}$. All terms on the right-hand side of \eqref{tmin} are of order $O(\epsilon^{\alpha'})$ or higher. Consequently, there exists a constant $\theta > 0$ such that $t_{\text{min}}\geq \theta \epsilon$ for all sufficiently small $\epsilon$. From the definition of $u_{c,\epsilon}$, this implies
\begin{equation}
u_{c,\epsilon}(z)\geq \tilde{u}_{\theta\epsilon}(z)+K(\theta\epsilon)^2-K\epsilon^2.
\end{equation}
Finally, by \eqref{estimate1-manifold}, we  obtain
\begin{equation}
-K\theta^2\epsilon^2\leq \tilde{u}_{\theta\epsilon}-u\leq C\epsilon^{\alpha'}.
\end{equation}
An application of Lemma \ref{elementarylemma-manifold} concludes the proof, showing $u \in C^{\alpha'}(\overline{\Omega})$.
\end{proof}

\subsection{The case of spaces with isolated singularities}

We now consider the H\"older regularity of the solution when the ambient space is a complex space with isolated singularities. Precisely, let $X$ be a reduced, locally irreducible complex space of dimension $n \geq 1$ with only isolated singularities, denoted $X_{\text{sing}}$. Equip $X$ with a Hermitian metric whose fundamental form is $\beta$, and let $d_\beta$ be the induced distance. Let $\Omega$ be a bounded, strongly pseudoconvex domain in $X$ such that $X_{\text{sing}} \subset \Omega$. Given $\varphi \in C^0(\partial \Omega)$ and $f \in L^p(\Omega)$ with $p > 1$, consider the Dirichlet problem
\begin{equation}
\label{LCMA-dir-singular}
  \left\{ 
  \begin{alignedat}{2}(dd^cu)^n~& =f\,\beta^n~&&\text{\ in} ~\ \ \Omega, \\\
 u ~&=\varphi~&&\text{\ on}~\ \  \pom,
\end{alignedat}
\right.
\end{equation}
The existence, uniqueness, and continuity of the solution were established in \cite{GGZ}. See \cite{CC} for the case of compact K\"ahler spaces. In \cite{G}, it was shown that the solution is Hölder continuous away from the singular points, with an exponent matching that in \cite{Ch1}. We now extend our H\"older estimate to this setting.

\begin{theo}\label{hold1-singular}
The unique solution $u \in \text{PSH}(\Omega) \cap C^0(\overline{\Omega})$ to \eqref{LCMA-dir-singular} is $\alpha'$-H\"older continuous on $\overline{\Omega} \setminus X_{\text{sing}}$, where $\alpha'$ is given by \eqref{beta}.
\end{theo}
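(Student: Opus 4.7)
The plan is to transcribe the proof of Theorem~\ref{hold1} to the singular setting, using two key facts: the singular set $X_{\text{sing}}$ consists of isolated points contained strictly inside $\Omega$, so $\partial\Omega$ lies entirely in the smooth locus of $X$; and both the $L^\infty$ estimate of Theorem~\ref{WWZThm} and the stability estimate of Theorem~\ref{prep-holder} are available in the singular setting via the extensions in \cite{GGZ}. First I would establish the boundary H\"older estimate \eqref{bdy-improve}. Since $\partial\Omega$ admits an open neighborhood in $X$ contained in $X\setminus X_{\text{sing}}$, which is a smooth Hermitian manifold, the barrier construction of Lemma~\ref{BoundaryHolder-manifold} goes through verbatim in local coordinates around any boundary point $y\in\partial\Omega$. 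The only global input required is the $L^\infty$ bound for the auxiliary Dirichlet problem \eqref{LCMA-dir-1}, which is supplied by the singular-space version of Theorem~\ref{WWZThm} in \cite{GGZ}. This yields $|u(x)-u(y)|\le C\,d_\beta(x,y)^\beta$ for every $x\in\Omega$ and $y\in\partial\Omega$, with $\beta$ as in \eqref{beta}.

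Next, fix a compact set $K\subset \overline{\Omega}\setminus X_{\text{sing}}$ and set $d_0=d_\beta(K, X_{\text{sing}})>0$. For every $\epsilon\in(0, d_0/2)$, the regularization $\tilde u_\epsilon$ from \eqref{reg-manifold} and its Kiselman--Legendre transform $u_{c,\epsilon}$ from \eqref{Kiselman-trans} are well defined on a neighborhood of $K$, and Lemma~\ref{BDLemma} applies there. Because $X_{\text{sing}}$ is pluripolar and $u_{c,\epsilon}$ is locally bounded near $X_{\text{sing}}$, the upper semi-continuous envelope extends $u_{c,\epsilon}$ uniquely to a bounded psh function on the whole $\Omega_\epsilon$ by the standard removability theorem for bounded psh functions across polar sets. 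With this extension in hand, I would run the manifold comparison argument: apply the stability estimate from \cite{GGZ} on $\Omega_\epsilon$ to $v=(Ac+K\epsilon)\rho+u_{c,\epsilon}$ against $u+C\epsilon^\beta$, and then bound $\|\max\{u_{c,\epsilon}-u-C\epsilon^\beta,0\}\|_{L^1(\Omega_\epsilon,\beta^n)}$ by $C\epsilon^{1+\beta}$ using the cancellation computation from Lemmas 2.3--2.4 of \cite{DDGHKZ}. The contribution to this $L^1$ integral from an $\epsilon$-neighborhood of the finitely many points of $X_{\text{sing}}$ is $O(\epsilon^{2n})$ and is negligible compared to $\epsilon^{1+\beta}$. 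Choosing $c=\epsilon^{\alpha'}$ with $\alpha'=\min\{\beta,(1+\beta)\gamma\}$ and invoking Lemma~\ref{elementarylemma} on $K$ (extended to manifold charts by a finite covering argument) gives $|u(x)-u(y)|\le C|x-y|^{\alpha'}$ on $K$; since $K$ is arbitrary, this yields H\"older continuity on all of $\overline{\Omega}\setminus X_{\text{sing}}$.

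The main technical obstacle is ensuring that the extension of $u_{c,\epsilon}$ across $X_{\text{sing}}$ still satisfies both the Hessian bound $dd^c u_{c,\epsilon}\ge -(Ac+K\epsilon)\omega$ of Lemma~\ref{BDLemma} and the pointwise inequalities \eqref{hat-u} globally on $\Omega_\epsilon$, and that the stability estimate compares the extended function to $u$ correctly. Pluripolarity of the isolated set $X_{\text{sing}}$ is the key tool: the upper semi-continuous extension agrees with $u_{c,\epsilon}$ off a polar set and equals it almost everywhere, so the current inequality, the $L^1$ comparison, and the stability estimate are unaffected; the bounds \eqref{hat-u} pass to the extension by upper semi-continuity. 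Once this extension is justified, the remainder is a direct transcription of the argument for Theorem~\ref{hold1}, with the singular-space stability estimate of \cite{GGZ} replacing the classical ones at the crucial step \eqref{HR1}.
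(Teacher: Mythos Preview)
Your approach differs substantially from the paper's and has a genuine gap at the extension step. You assert that $u_{c,\epsilon}$ is ``locally bounded near $X_{\text{sing}}$'' and then invoke removability across the pluripolar set $X_{\text{sing}}$, but $u_{c,\epsilon}$ is not defined in any punctured neighborhood of the singular points. The regularization $\tilde u_\epsilon(z)$ in \eqref{reg-manifold} requires $\exp_z$ to be defined for all $|\xi|_\omega\le\epsilon$, and on the incomplete smooth locus $X\setminus X_{\text{sing}}$ the injectivity radius collapses as $z\to X_{\text{sing}}$; thus the set where $\tilde u_\epsilon$ (and hence $u_{c,\epsilon}$) is undefined is an open \emph{ball} around each singular point, not a polar set, so the removability theorem does not apply. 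Even if one formally extended the integral, the constants $A$ and $K$ in Lemma~\ref{BDLemma} come from curvature bounds for the ambient metric, which are not uniform near $X_{\text{sing}}$; the Hessian inequality \eqref{psh-uce} and the monotonicity of $\tilde u_t+Kt^2$ would then fail to hold with fixed constants on all of $\Omega_\epsilon\setminus X_{\text{sing}}$, and without a global quasi-psh competitor on $\Omega_\epsilon$ the stability estimate \eqref{HR1} cannot be invoked. Your own last paragraph flags this as the main obstacle, but pluripolarity of $X_{\text{sing}}$ alone does not resolve it.

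The paper sidesteps the issue entirely by pulling back through a resolution of singularities $\pi:\widetilde\Omega\to\Omega$. Since $X_{\text{sing}}\subset\Omega$, the map $\pi$ is a biholomorphism near $\pom$, so $\widetilde\Omega$ is a smooth strictly pseudoconvex domain with boundary data $\pi^*\varphi\in C^\alpha$. Equipping $\widetilde\Omega$ with $\theta=\pi^*\beta+\eta$, where $\eta\ge 0$ is supported in $\pi^{-1}(B_\delta(X_{\text{sing}}))$ and positive near the exceptional divisor, one finds that $\pi^*u$ solves $(dd^c\pi^*u)^n=(\pi^*f)\,g\,\theta^n$ with $g$ bounded; hence Theorem~\ref{hold1} applies on the \emph{smooth} manifold $\widetilde\Omega$ and gives $\pi^*u\in C^{\alpha'}$ with respect to $d_\theta$. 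As $\theta=\pi^*\beta$ outside the support of $\eta$, this transfers directly to $u$ on $\overline\Omega\setminus B_\delta(X_{\text{sing}})$, and $\delta>0$ was arbitrary. This reduction to the already-proved smooth case is both shorter and avoids all the difficulties with regularization near singular points that your direct approach runs into.
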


\begin{proof}[Proof of Theorem \ref{hold1-singular}]
Fix $\delta > 0$. It suffices to prove that $u$ is $\alpha'$-H\"older continuous on $\overline{\Omega} \setminus B_\delta(X_{\text{sing}})$.
Let $\pi: \widetilde{\Omega} \to \Omega$ be a resolution of singularities. Equip $\widetilde{\Omega}$ with a metric $\theta$ defined by
\[\theta=\pi^*\beta+\eta,\]
where $\eta$ is a smooth non-negative $(1,1)$-form with support in $K = \pi^{-1}(B_\delta(X_{\text{sing}}))$, and which is positive definite in a neighborhood of the exceptional divisor $E$.

The pullback $\pi^* u$ satisfies the following equation on $\widetilde{\Omega}$:
\begin{equation}
  \left\{ 
  \begin{alignedat}{2}(dd^c\pi^*u)^n~& =\pi^*fg\,\theta^n~&&\text{\ in} ~\ \ \tilde\Omega, \\\
 \pi^*u ~&=\pi^*\varphi~&&\text{\ on}~\ \  \partial\tilde\Omega,
\end{alignedat}
\right.
\end{equation}
where $g$ is a bounded non-negative function such that $(\pi^* \beta)^n = g\, \theta^n$.
By Theorem \ref{hold1}, $\pi^* u$ is H\"older continuous with respect to the metric $\theta$ on $\widetilde{\Omega}$. Since $\eta$ is supported in $K$, the distance $d_\theta$ coincides with $d_\beta \circ \pi$ on the set $\pi^{-1}(\Omega \setminus B_\delta(X_{\text{sing}}))$. Therefore, the Hölder continuity of $\pi^* u$ with respect to $d_\theta$ implies the Hölder continuity of $u$ with respect to $d_\beta$ on $\Omega \setminus B_\delta(X_{\text{sing}})$, with the same exponent $\alpha'$.
\end{proof}

\vskip 10pt
{\bf Acknowledgements.} The authors would like to thank Haotong Fu for helpful discussions on Lemma \ref{elementarylemma}.

\end{document}